\newtheorem{dummy}{anything}[section]
\newtheorem{theorem}[dummy]{Theorem}
\newtheorem{lemma}[dummy]{Lemma}
\newtheorem{proposition}[dummy]{Proposition}
\newtheorem{corollary}[dummy]{Corollary}
\theoremstyle{definition}%%Change Theoremstyle
\newtheorem{notation}[dummy]{Notation}
\newtheorem{remark}[dummy]{Remark}
\newtheorem{condition}[dummy]{Condition}
\newcommand{\bbS}{\mathbb S}
\newcommand{\bbZ}{\mathbb Z}
\DeclareMathOperator{\Aut}{Aut}
\DeclareMathOperator{\Image}{im}
\DeclareMathOperator{\Out}{Out}
\DeclareMathOperator{\Inn}{Inn}
\DeclareMathOperator{\md}{mod}
\newcommand{\cP}{\mathcal P}
\newcommand{\sM}{\mathcal{M}}
\newcommand{\KO}{{{K}}}
\begin{document}

		\title{On smooth manifolds with the homotopy type of a homology sphere}
		\author{Mehmet Akif Erdal}
	\address{Department of Mathematics,
		Bilkent University, 06800, Ankara, Turkey}
	%\curraddr{}
	\email{merdal@fen.bilkent.edu.tr}
		
		\begin{abstract}
In this paper we study $\sM(X)$, the set of diffeomorphism classes of smooth manifolds with the simple homotopy type of $X$, via a map $\Psi$ from $\sM(X)$ into the quotient of $\KO(X) = [X, BSO]$ by the action of the group of homotopy classes of simple self equivalences of $X$. The map $\Psi$ describes which bundles over $X$ can occur as normal bundles of manifolds in $\sM(X)$. We determine the image of $\Psi$ when $X$ belongs to a certain class of homology spheres. In particular, we find conditions on elements of $\KO(X)$ that guarantee they are pullbacks of normal bundles of manifolds in $\sM(X)$.
		\end{abstract}

\keywords{	Homology sphere; Poincar\'{e} duality; $K$-theory ; Cobordism; Spectral sequence}
	\subjclass[2010]{57Q20, 55Q45} 
 	\maketitle

\section{Introduction}\label{intro}
Unless otherwise stated, by a manifold we mean a smooth, oriented, closed manifold with dimension greater than or equal to $5$. Given a simple Poincar\'{e} complex $X$ with formal dimension $m$, a classical problem in topology is to understand the set of diffeomorphism classes of smooth manifolds in the simple homotopy type of $X$. For such an aim, a fundamental object to study is the smooth simple structure set $\mathcal{S}^s (X)$ (see \cite{surveywall} page 125-126 for notation and details). Elements of $\mathcal{S}^s (X)$ are equivalence classes of simple homotopy equivalences $\omega:M\to X$ from an $m$-dimensional manifold $M$. Two such homotopy equivalences $\omega_1:M_1\to X$ and $\omega_2:M_2\to X$ are said to be equivalent if there is a diffeomorphism $g:M_1\to M_2$ such that $\omega_1$  is  homotopic to the composition $\omega_2\circ g$. An element of  $\mathcal{S}^s (X)$ is called a simple smooth manifold structure on $X$. Note that composition of an element in  $\mathcal{S}^s (X)$ with a simple self equivalence of $X$ gives another element in $\mathcal{S}^s (X)$, although the manifold is still the same. Hence, we need to quotient out simple self equivalences of $X$ in order to get the set of diffeomorphism classes of smooth manifolds in the simple homotopy type of $X$. Denote  $\Aut_s(X)$ the group of homotopy classes of simple self equivalences of $X$. Then $\Aut_s(X)$ acts on $\mathcal{S}^s (X)$ by composition. The set of diffeomorphism classes of smooth manifolds in the simple homotopy type of $X$, $\sM(X)$, is defined as the set of orbits of $\mathcal{S}^s (X)$ under the action of $\Aut_s(X)$, i.e. $\sM(X):=\mathcal{S}^s (X)/\Aut_s(X)$.

Let $\KO(X)$ denote the group of homotopy classes of maps $[X,BSO]$ (here, we abandon the traditional notation $\widetilde{KSO}(X)$ for simplicity). Every simple homotopy equivalence $X\to X$ induces an automorphism on $\KO(X)$. Let $\overline{\Aut_s}(\KO(X))$ denote the subgroup of $\Aut(\KO(X))$ that consist of automorphisms induced by the simple self equivalences of $X$. There is a canonical action of  $\overline{\Aut_s}(\KO(X))$ on $\KO(X)$ again given by composition. We denote by $\mathfrak{K}(X) $ the set of orbits of $\KO(X)$ under the action of $\overline{\Aut_s}(\KO(X))$. 

As pointed out in \cite{surveywall}  computations of $\Aut_s(X)$ and $\mathcal{S}^s (X)$ are in general difficult, so does the computation of $\sM(X)$. On the other hand, computations  of $\KO(X)$ and $\overline{\Aut_s}(\KO(X))$ are easier in most cases as $\KO(-)$ is a (generalized) cohomology group (see \cite{ahss}). In this paper, we compare $\mathfrak{K}(X) $ with $\sM(X)$ where $X$ belongs to a certain class of Poincar\'{e} complexes.

There is a map $\Psi:\sM(X)\to \mathfrak{K}(X)$ defined by $[\omega:M\to X]\mapsto (\omega^{-1})^*(\nu)$ where $\nu:M\to BSO$ denotes the normal bundle of $M$ (see Proposition \ref{psi}). For a prime $q$, by a $\bbZ/q$-homology $m$-sphere we mean a simple Poincar\'{e} complex $X$ of formal dimension $m$ such that $H_*(X;\bbZ/q)\cong H_*(S^m;\bbZ/q)$. For a general reference to Poincar\'{e} complexes we refer to \cite{wallpc} and  \cite{spivak}. Our purpose is to determine the image of $\Psi$ for certain such homology spheres. Here we also assume that such a homology sphere admits a degree one normal map (equivalently the Spivak normal fibration has a vector bundle reduction), since otherwise the problem is trivial.

Let $m$ be an odd number and $S$ be a subset of the set of primes between $(m+4)/4$ and $(m+2)/2$. Denote by $\mathfrak{K}(X)_{(S,\mathtt{q}_1)} \subset  \mathfrak{K}(X)$ the set of orbits in $\mathfrak{K}(X)$ that can be represented by elements  $\xi$ such that for each $p$ in $S$ the first $ \md p $ Wu class of $\xi$ satisfy the identity $\mathtt{q}^p_1(\xi)+\mathtt{q}^p_1(X)=0$ (see \cite{Wu} or \cite{milsta}). Note that $\KO(S^m)\cong 0$ for $m$ odd and $m\neq 1 (\md 8)$. The main object of this paper is to prove  the following:
\begin{theorem}\label{main}Let $m$ be an odd number and $S$ be a subset of the set of primes between $(m+4)/4$ and $(m+2)/2$. Let  $X$ with a given map $f:S^m\to X$ be a $\bbZ/q$-homology $m$-sphere, so that $f$ is a $\bbZ/q$-homology isomorphism for every prime $q< (m+2)/2$ with $q\notin S$. Assume further that  $\pi_1(X)$ is of odd order. Then, the image of $\Psi$ consists of orbits in $ \mathfrak{K}(X)_{(S,\mathtt{q}_1)} $ that are represented by elements in the kernel of $f^*:\KO(X)\to \KO(S^m)$. In particular, if $m\neq 1(\md 8)$, then the image of $\Psi$ is $ \mathfrak{K}(X)_{(S,\mathtt{q}_1)} $. Furthermore, if $S=\emptyset$, then $\Psi$ is surjective.
\end{theorem}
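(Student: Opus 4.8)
The plan is to run the problem through surgery theory and then reduce everything to a computation in $\KO$- and $J$-theory. First, a class $\xi\in\KO(X)$ arises, up to the $\Aut_s(X)$-action, as $(\omega^{-1})^*\nu_M$ for a simple homotopy equivalence $\omega\colon M\to X$ precisely when $\xi$ is a vector bundle reduction of the Spivak normal fibration $\nu_X$ and the resulting degree one normal map over $(X,\xi)$ can be surgered to a simple homotopy equivalence; the obstruction to the latter lies in $L^s_m(\bbZ\pi_1(X))$. Since $\pi_1(X)$ is finite of odd order and $m$ is odd, this group vanishes (Bak's theorem, the simple decoration being controlled by the odd-order hypothesis), so $\Image(\Psi)$ is exactly the set of $\Aut_s(X)$-orbits of vector bundle reductions of $\nu_X$. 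Fixing one reduction $\xi_0$, the reductions map onto the coset $\xi_0+T(X)\subset\KO(X)$, where $T(X)=\ker\bigl(\KO(X)\to[X,BSG]\bigr)$ is the group of stably fibre homotopy trivial bundles, by exactness of $[X,G/O]\to[X,BSO]\to[X,BSG]$. It therefore suffices to prove the equality of subsets $\xi_0+T(X)=R$, where $R:=\{\xi\in\KO(X):f^*\xi=0\text{ and }\mathtt{q}^p_1(\xi)+\mathtt{q}^p_1(X)=0\ \forall p\in S\}$; the passage to $\Aut_s(X)$-orbits is then routine, using that the set of reductions is $\Aut_s(X)$-invariant and that the individual Wu conditions are, by naturality of Wu classes under self homotopy equivalences of $X$.

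The inclusion $\xi_0+T(X)\subseteq R$ is the easier half. The first $\md p$ Wu class is defined for spherical fibrations and is a fibre homotopy invariant, hence constant on $\xi_0+T(X)$; the intrinsic stable tangent--normal Wu relation on the Poincar\'e complex $X$, in degree $2(p-1)$ where $\mathtt{q}^p_1$ is additive, identifies this common value with $-\mathtt{q}^p_1(X)$. That $f^*\xi=0$ uses the hypotheses once more: since $\pi_1(X)$ has odd order and $f$ is a $\bbZ/2$-homology isomorphism (note $2\notin S$), the $2$-completion of $f$ is a homotopy equivalence of simply connected spaces, so $\nu_X$ is $2$-adically trivial and $f^*\xi$ lies in the odd-torsion part of $\KO(S^m)$, which is $0$ because $m$ is odd (this condition being vacuous when $m\not\equiv1\ (\md 8)$, since then $\KO(S^m)=0$).

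The converse $R\subseteq\xi_0+T(X)$ is the substantive step; equivalently I must compute $T(X)$ prime by prime via the arithmetic fracture square. Rationally $\KO(X)\otimes\bbQ=0$, as $X$ is a rational homology sphere of odd dimension. At $q=2$: $X$ is a $\bbZ/2$-homology sphere, so the Atiyah--Hirzebruch spectral sequence collapses to $\widetilde H^m(X;\pi_m BSO)$ and $f^*\colon\KO(X)_{(2)}\to\KO(S^m)$ is an isomorphism, while $T(X)_{(2)}=0$ since the relevant $J$-homomorphism $\pi_{m-1}(SO)\to\pi^s_{m-1}$ is injective in the even degree $m-1$. At an odd prime $q\notin S$ with $q<(m+2)/2$: $X$ is a $\bbZ/q$-homology sphere, so its integral cohomology has no $q$-torsion below degree $m$ and the spectral sequence leaves only $\widetilde H^m(X;\pi_m BSO)_{(q)}=0$ ($m,q$ odd), giving $\KO(X)_{(q)}=0$. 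At an odd prime $q\notin S$ with $q\ge(m+2)/2$: $(BSG)_{(q)}$ is $(2q-3)$-connected with $2q-3\ge m$, so $[X,BSG]_{(q)}=0$ and $T(X)_{(q)}=\KO(X)_{(q)}$. Hence $\ker f^*$ is the sum of the odd-primary parts of $\KO(X)$, and at every odd $q\notin S$ one has $T(X)_{(q)}=\KO(X)_{(q)}$ (either $0$ or, for $q\ge(m+2)/2$, all of it); so the whole question collapses to showing, for each $p\in S$, that $T(X)_{(p)}=\ker(\mathtt{q}^p_1|_{\KO(X)_{(p)}})$, i.e.\ a $p$-local bundle over $X$ is stably fibre homotopy trivial iff its first $\md p$ Wu class vanishes. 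This is exactly where the inequalities $(m+4)/4<p<(m+2)/2$ enter: $2(p-1)<m$ puts the obstruction group $H^{2(p-1)}(X;\bbZ/p)$ in range (and $2(p-1)\equiv0\ (\md 4)$, matching the degrees where $\KO(X)_{(p)}$ lives), while $4(p-1)>m$ pushes the second $\md p$ Wu class, the element $\alpha_2\in\pi^s_{4(p-1)-1}$, and all higher $p$-torsion of $\pi^s_*$ above the top dimension of $X$; consequently, through dimension $m$, $(BSG)_{(p)}$ has a single nonzero homotopy group $\pi_{2p-2}=\bbZ/p$, the unique obstruction to lifting $\xi\colon X\to(BSO)_{(p)}$ along $(G/O)_{(p)}\to(BSO)_{(p)}\to(BSG)_{(p)}$ is its image in $H^{2p-2}(X;\bbZ/p)$, and the classical identification of the first $k$-invariant of $(BSG)_{(p)}$ with the first $\md p$ Wu class shows this image is a unit multiple of $\mathtt{q}^p_1(\xi)$. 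Splicing these local computations yields $\xi_0+T(X)=R$, and the remaining assertions follow since $\ker f^*=\KO(X)$ when $m\not\equiv1\ (\md 8)$ and $\mathfrak K(X)_{(\emptyset,\mathtt{q}_1)}=\mathfrak K(X)$.

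The step I expect to be the main obstacle is this last $p$-local computation — identifying the first obstruction to a $p$-local fibre homotopy trivialization with the first $\md p$ Wu class and checking there are no intervening secondary obstructions, including the borderline case $4(p-1)=m+1$ where $\alpha_2$ sits exactly in the top dimension of $X$. I would handle it by working with a minimal Postnikov stage of $(BSO)_{(p)}\to(BSG)_{(p)}$ through dimension $m$, whose relevant $k$-invariant is the $\beta P^1$-type operation dual to $\mathtt{q}^p_1$, and feeding in the connectivity and the low-dimensional homotopy of $(BSG)_{(p)}$ in this range. A secondary technical point, flagged above, is the non-simply-connected bookkeeping — the $L^s$ versus $L^h$ comparison, the localizations of $X$ at odd primes dividing $|\pi_1(X)|$, and the passage to $\Aut_s(X)$-orbits — all of which remain under control thanks to the odd-order assumption on $\pi_1(X)$.
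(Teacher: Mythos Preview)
Your proposal is correct and takes a genuinely different route from the paper.

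The paper works on the cobordism side: for each candidate bundle $\xi$ it runs the James spectral sequence for $\Omega_*(\xi)$ and asks when the edge homomorphism $\Omega_m(\xi)\to H_m(X;\bbZ)$ is onto (Condition~\ref{cond1}), which is equivalent to $\xi$ carrying a degree one normal map. The differentials are analysed prime by prime. For $q\notin S$ with $q<(m+2)/2$ the $\bbZ/q$-homology sphere hypothesis kills the $q$-torsion in the targets $H_{m-r}(X;\pi^S_{r-1})$ for $r<m$, and a comparison with the trivial bundle over $S^m$ (Lemma~\ref{dm}) handles $d^m$. For $p\in S$ the only possible $p$-torsion differential is $d^{2p-2}$, which Lemma~\ref{d2q-2} identifies via the Cartan formula as the dual of $x\mapsto \cP^1(x)+\mathtt q_1(\xi)\cup x$; Poincar\'e duality and the intrinsic Wu relation $\cP^1(x)=\mathtt q_1(X)\cup x$ then show this differential vanishes iff $\mathtt q_1^p(\xi)+\mathtt q_1^p(X)=0$. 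The $\ker f^*$ condition enters through Lemma~\ref{notinker}, again via a JSS comparison. Bak's theorem is invoked exactly as you do.

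You instead stay on the classifying-space side: the image of $\Psi$ is the $\Aut_s$-orbit set of reductions of $\nu_X$, which you write as the coset $\xi_0+T(X)$ with $T(X)=\ker(\KO(X)\to[X,BSG])$, and you compute $T(X)$ one prime at a time by localising $BSG$. The small primes $q\notin S$ disappear because $\KO(X)_{(q)}=0$; the large primes $q\notin S$ give $T(X)_{(q)}=\KO(X)_{(q)}$ because $(BSG)_{(q)}$ is $m$-connected; and for $p\in S$ the Postnikov tower of $(BSG)_{(p)}$ through dimension $m$ collapses to $K(\bbZ/p,2p-2)$, so the unique obstruction is the pullback of its fundamental class, which is a unit multiple of $\mathtt q_1^p$.

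The two arguments are dual in the sense that your obstruction class in $H^{2p-2}(X;\bbZ/p)$ and the paper's $d^{2p-2}$ differential are Poincar\'e duals of one another; indeed the identification of the first $k$-invariant of $(BSG)_{(p)}$ with $\mathtt q_1^p$ is essentially the same calculation (via the Thom isomorphism and $\cP^1 U=\mathtt q_1\cup U$) that the paper performs in Lemma~\ref{d2q-2}. What your approach buys is a cleaner global statement---you compute the whole group $T(X)$ rather than checking a list of differentials---and it makes transparent why the primes sort into three regimes. What the paper's approach buys is that it never needs the additivity of $\mathtt q_1$ or the infinite-loop structure on $BSO\to BSG$, and it yields the extra dividend of Corollary~\ref{ed} on the injectivity of $\mathfrak{\bar{ed}}:\pi_*(\bbS)\to\pi_*(M\xi)$, which your line of argument does not directly see. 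Your flagged ``main obstacle'' is genuine but standard: the fibre-homotopy invariance of $\mathtt q_1^p$ (it is defined for spherical fibrations) plus $H^{2p-2}(BSG;\bbZ/p)\cong\bbZ/p$ forces the identification up to a unit, and the borderline case $m=4p-5$ is harmless since $\alpha_2$ then lands in $\pi_{m+1}(BSG)$, above $\dim X$.
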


Observe that as  $S$ gets larger, the image of $\Psi$ gets smaller. In particular, if $T=\{q \text{ prime }:   q< \frac{m+2}{2}  \}\backslash S$, then we do not need to make the assumption of Theorem \ref{main} on the $ \md q $ Wu classes for the primes   $q \in T$. On the other hand, for an odd prime $q$, $X$ being a  $\bbZ/q$-homology sphere implies there is no $q$-torsion in $K(X)$. Hence, primes in $T$ also affect the image of $\Psi$.

It is well-known, due to \cite{wall}, that a degree one normal map can be surgered to a simple homotopy equivalence if and only if the associated surgery obstruction vanishes. The main result in \cite{bak} states that if $\pi_1(X)$ is of odd order, then the odd dimensional surgery obstruction groups, $L^s_{m}(\bbZ[\pi_1(X)])$,  vanish, i.e. every degree one normal map can be surgered to a simple homotopy equivalence. An essential step in the proof of Theorem \ref{main}  is that, under the stated conditions, an element in $\KO(X)$ admits a degree one normal map if an only if it is in the kernel of $f^*$ and it has the same $\md p $ Wu classes as the Spivak normal fibration of $X$ for each $p \in S$. In particular,  if $m\neq 1(\md 8)$, then bundles admitting degree one normal maps are completely determined by their $\md p $  Wu classes for $p \in S$. In the case when $S=\emptyset$ and $m\neq 1(\md 8)$ every stable vector bundle over $X$ admits a degree one normal map. The rest is  to determine the action of $\overline{\Aut_s}(\KO(X))$ on $\KO(X)$, which is given by the restriction of the canonical action of $\Aut(\KO(X))$.

Some examples of such $X$ come from the smooth spherical space forms. Some applications of our theorem  are discussed in Section \ref{examples}. Smith theory may also provide examples, although we do not mention any such example in this note.

\section{Notation and Preliminaries}\label{prelim}
Let $X$ be a  simple Poincar\'{e}  complex with formal dimension $m$. Given a  stable vector bundle $\xi:X\to BSO$, a $\xi$-manifold is a manifold whose stable normal bundles lifts to $X$ through ${\xi}$, we refer \cite{stong} for more details (in \cite{stong} such objects are called $(B,f)$-manifolds). We denote by $\Omega_k(\xi)$ the cobordism group of $k$-dimensional $\xi$-manifolds. An element of $\Omega_k(\xi)$ is often denoted by $[\rho:M\to X]$, where $M$ is a $k$-dimensional manifold and $\rho$ is a lifting of its stable normal bundle to $X$ through $\xi$, and brackets denote the homotopy class of such liftings (see \cite{teichner} Proposition 2 for this notation). Such a map $\rho$ is called a \emph{normal map}, and if degree of $\rho$ is equal to $1$, i.e. $\rho_*[M]=[X]\in H_m(X;\bbZ)$, then it is called a \emph{degree one normal map} (see \cite{lueck} Definition 3.46.). Due to the Pontrjagin-Thom construction, the group  $\Omega_k(\xi)$ is isomorphic to $k$-th homotopy group of  $M\xi$, the Thom spectra associated to ${\xi}$ \cite{thom}.

Our primary tool is the James spectral sequence, which is a variant of the Atiyah-Hirzebruch spectral sequence (see \cite{teichner}, Section II). Let $h$ be a generalized homology theory represented by a connective spectrum,  $F\to X\stackrel{f}\to B$  be  an $h$-orientable fibration with fiber $F$ and ${\xi:X\to BSO}$ be  a stable vector bundle. The James spectral sequence for $h$, $f$ and $\xi$ has $E_2$-page $E^2_{s,t}=H_s(B;h_t(M\xi|_F))$ and converges to $h_{s+t}(M\xi)$. In the case when $h$ is the stable homotopy, the edge homomorphism of this spectral sequence coming from the base line is as follows:
\begin{proposition}[see \cite{teichner} Proposition 2]\label{edge}
	The edge homomorphism of the James spectral sequence for stable homotopy, $f:X\to B$ and ${\xi:X\to BSO}$ is a homomorphism $\mathfrak{ed}: \Omega_n(\xi) \to H_n(B,\bbZ)$ given by $$\mathfrak{ed}[\rho:M\to X]= f_*\circ\rho_*[M]$$ for every element $[\rho:M\to X]\in \Omega_n(\xi) $.
\end{proposition}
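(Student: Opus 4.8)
The plan is to deduce Proposition~\ref{edge} from the Pontryagin--Thom isomorphism $\Omega_n(\xi)\cong\pi_n^s(M\xi)$ together with a cellular--transversality analysis of the base-line edge map, in parallel with the classical fact that the edge homomorphism of the Atiyah--Hirzebruch spectral sequence of a space $B$ for integral homology is the Hurewicz-type map $\Omega_n^{\mathrm{fr}}(B)\to H_n(B;\bbZ)$ sending $[g\colon M\to B]$ to $g_*[M]$. As a first step I would identify the base line of the $E^2$-page. Because $f\colon X\to B$ is assumed $h$-orientable, the coefficient system $s\mapsto h_t(M\xi|_F)$ on $B$ is trivial, and since $\xi$ is a stable vector bundle the spectrum $M\xi|_F$ is connective with $H_0(M\xi|_F;\bbZ)\cong\bbZ$; by the Hurewicz theorem $h_0(M\xi|_F)=\pi_0^s(M\xi|_F)\cong\bbZ$, an isomorphism that is pinned down once we fix the Thom class of $\xi|_F$ and the orientation of $f$. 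Thus $E^2_{s,0}=H_s\bigl(B;h_0(M\xi|_F)\bigr)\cong H_s(B;\bbZ)$, and by definition $\mathfrak{ed}$ is the composite
\[ \Omega_n(\xi)=\pi_n^s(M\xi)\twoheadrightarrow E^\infty_{n,0}\hookrightarrow E^2_{n,0}\cong H_n(B;\bbZ). \]

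Next I would unwind the filtration producing the James spectral sequence: it is the one associated to the filtration $F_sM\xi:=M\bigl(\xi|_{f^{-1}(B^{(s)})}\bigr)$ of $M\xi$ induced by the skeletal filtration of $B$. The cellular trivializations of $f$ over the cells of $B$ give $F_sM\xi/F_{s-1}M\xi\simeq\bigvee_{e}\Sigma^{s}(M\xi|_{F_e})$, the wedge over the $s$-cells $e$ of $B$, so that the bottom row $E^1_{\ast,0}$ is the cellular chain complex $C_\ast\bigl(B;h_0(M\xi|_F)\bigr)$. Now let $\alpha\in\Omega_n(\xi)$ be represented by $[\rho\colon M\to X]$; under Pontryagin--Thom it corresponds to the collapse map $S^{n+N}\to\mathrm{Th}(\nu_M^N)\to\mathrm{Th}(\xi^N)=(M\xi)_{n+N}$ associated to an embedding $M\subset\bbR^{n+N}$ and to the lift $\rho$. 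After a cellular homotopy we may assume $f\circ\rho\colon M\to B$ is cellular; since $\dim M=n$ its image lies in $B^{(n)}$, hence $\rho(M)\subset f^{-1}(B^{(n)})$ and the collapse map already factors through $F_nM\xi$.

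The last step is to project to the bottom row. Composing with $F_nM\xi\to F_nM\xi/F_{n-1}M\xi\simeq\bigvee_e\Sigma^n(M\xi|_{F_e})$ and making $f\circ\rho$ transverse to a chosen interior point $b_e$ of each $n$-cell $e$, one reads off that the image of $\alpha$ in $E^1_{n,0}=C_n\bigl(B;h_0(M\xi|_F)\bigr)$ is the cellular $n$-chain whose coefficient on $e$ is the signed number of points of $(f\circ\rho)^{-1}(b_e)$, the signs being determined by the orientation of $M$, the orientation of $e$, and the chosen generator of $h_0(M\xi|_{F_e})$. This is exactly the standard cellular representative of $(f\circ\rho)_*[M]$; since it arises from an honest map $S^{n+N}\to(M\xi)_{n+N}$ it is automatically a cycle, its class in $E^2_{n,0}\cong H_n(B;\bbZ)$ is well defined and survives to $E^\infty_{n,0}$, and it equals $(f\circ\rho)_*[M]=f_*\circ\rho_*[M]$.

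I expect the main obstacle to be the orientation and Thom-class bookkeeping that runs through all three steps: one must verify that the identifications $h_0(M\xi|_{F_e})\cong\bbZ$ are made coherently across all cells --- this is precisely where $h$-orientability of $f$ is used, to trivialize the coefficient system --- and that the Thom-spectrum twist does not disturb the interpretation of the projected collapse map as a signed count of preimages. Once the conventions are fixed, the transversality computation identifying that count with the cellular cycle of $f_*\rho_*[M]$ is routine, and independence of the auxiliary choices (the embedding $M\subset\bbR^{n+N}$, the cellular approximation) follows from the standard uniqueness statements in Pontryagin--Thom theory together with the naturality of the spectral sequence; compare \cite{teichner}, Section~II.
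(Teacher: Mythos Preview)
The paper does not supply its own proof of this proposition; it is quoted verbatim from \cite{teichner}, Proposition~2, and used as a black box. Your outline is the standard argument one expects behind that reference: filter $M\xi$ by $F_sM\xi=M(\xi|_{f^{-1}(B^{(s)})})$, identify the bottom row of the $E^1$-page with the cellular chain complex of $B$ via the Thom class and the $h$-orientation of $f$, and then use Pontryagin--Thom together with cellular approximation and transversality to see that a class $[\rho\colon M\to X]$ projects to the cellular cycle representing $(f\circ\rho)_*[M]$. One small point of care: when you make $f\circ\rho$ cellular you must also lift that homotopy to a homotopy of $\rho$ landing in $f^{-1}(B^{(n)})$, which uses that $f$ is a fibration; you implicitly assume this but it is worth saying. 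Apart from that, the argument is correct and matches the intended one.
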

 The Atiyah-Hirzebruch spectral sequences for $M\xi$ is isomorphic to the James spectral sequence for stable homotopy, $id:X\to X$ and ${\xi:X\to BSO}$. This follows from the fact that $M\xi|_*$ is the sphere spectrum. This isomorphism is given by the Thom isomorphism (see proof of Proposition 1 in \cite{teichner}). In this paper, we will only use this edge homomorphism of  the James spectral sequence for the stable homotopy, the  identity map $id:X\to X$ and a given stable vector bundle ${\xi:X\to BSO}$. In this case $\mathfrak{ed}: \Omega_n(\xi) \to H_n(X,\bbZ)$ is the map given by $[\rho:M\to X]\mapsto \rho_*[M]$ for every element $[\rho:M\to X]\in \Omega_n(\xi) $. The other edge homomorphism for this spectral sequence will be denoted by $\mathfrak{\bar{ed}}:\pi_*(\bbS)\to \pi_*( M\xi) $, where $\bbS$ denotes the sphere spectrum.

Recall that $\mathfrak{K}(X) $ denotes the quotient $\KO(X)/\overline{\Aut_s}(\KO(X)) $ (see Section \ref{intro}). Let $\Phi:\KO(X)\to \mathfrak{K}(X)$ be the quotient map. We define a map $\Psi$ from $\sM(X)$ to $\mathfrak{K}(X)$ as follows: Let $M$ be a smooth manifold equipped with a simple homotopy equivalence $\omega:M\to X$ and let $\nu$ be the stable normal bundle of $M$. Let  $g:X\to M$ be the homotopy inverse of $\omega$. Then the pullback bundle $g^*(\nu)$ defines an element in $\KO(X)$. If $[M]$ is the diffeomorphism class of $M$ in $\sM(X)$, we define $\Psi[M]:=\Phi(g^*([\nu]))$.
\begin{proposition}\label{psi}
	$\Psi$ is well defined.
\end{proposition}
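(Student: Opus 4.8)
The plan is to unwind the three choices implicit in the formula $\Psi[M]=\Phi(g^*[\nu])$ — the homotopy inverse $g$ of $\omega$, the representative $\omega\colon M\to X$ of a class in $\mathcal{S}^s(X)$, and the representative of an $\Aut_s(X)$-orbit — and to check at each stage that the outcome does not change. Throughout I use three standard facts recalled in Section~\ref{intro}: a homotopy equivalence admits a homotopy inverse that is unique up to homotopy; a composite of simple homotopy equivalences is simple; and the homotopy inverse of a simple homotopy equivalence is again simple (additivity of Whitehead torsion). First, if $g,g'$ are both homotopy inverses of $\omega$, then $g\simeq g\circ(\omega\circ g')\simeq g'$, so $g^*=(g')^*$ on $\KO(X)=[X,BSO]$; hence, for a fixed simple homotopy equivalence $\omega\colon M\to X$, the class $\Psi_0(\omega):=g^*[\nu]\in\KO(X)$ is unambiguous, where $\nu$ is the stable normal bundle of $M$.

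Next I would show that $\Psi_0(\omega)$ depends only on the class of $\omega$ in $\mathcal{S}^s(X)$. Suppose $\omega_i\colon M_i\to X$ for $i=1,2$ are simple homotopy equivalences and $d\colon M_1\to M_2$ is a diffeomorphism with $\omega_1\simeq\omega_2\circ d$. Since the elements of $\mathcal{S}^s(X)$ are degree-one homotopy equivalences of oriented manifolds, $d$ is orientation preserving, so its differential gives $d^*(TM_2)\cong TM_1$ as oriented bundles, whence $d^*[\nu_2]=[\nu_1]$ in $\KO(M_1)$ for the stable normal bundles $\nu_i$ of $M_i$. If $g_1$ is a homotopy inverse of $\omega_1$, then $\omega_2\circ(d\circ g_1)\simeq\omega_1\circ g_1\simeq\mathrm{id}_X$, so $d\circ g_1$ is a right — hence two-sided — homotopy inverse of $\omega_2$, and therefore $d\circ g_1\simeq g_2$ for any homotopy inverse $g_2$ of $\omega_2$. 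Combining these, $\Psi_0(\omega_2)=g_2^*[\nu_2]=(d\circ g_1)^*[\nu_2]=g_1^*(d^*[\nu_2])=g_1^*[\nu_1]=\Psi_0(\omega_1)$, so $\Psi_0$ descends to a well-defined map $\mathcal{S}^s(X)\to\KO(X)$.

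Finally I would verify that $\Phi\circ\Psi_0\colon\mathcal{S}^s(X)\to\mathfrak{K}(X)$ is constant on $\Aut_s(X)$-orbits, which produces the desired factorization through $\sM(X)=\mathcal{S}^s(X)/\Aut_s(X)$. Let $\phi\colon X\to X$ be a simple self-equivalence with homotopy inverse $\phi^{-1}$, and let $g$ be a homotopy inverse of $\omega$; then $g\circ\phi^{-1}$ is a homotopy inverse of $\phi\circ\omega$, and so $\Psi_0(\phi\circ\omega)=(g\circ\phi^{-1})^*[\nu]=(\phi^{-1})^*(g^*[\nu])=(\phi^{-1})^*(\Psi_0(\omega))$. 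As $\phi^{-1}$ is itself a simple self-equivalence of $X$, the automorphism $(\phi^{-1})^*$ is one of those induced by a simple self-equivalence, hence lies in $\overline{\Aut_s}(\KO(X))$, and $\Phi$ is by definition the quotient map for the action of that group; therefore $\Phi(\Psi_0(\phi\circ\omega))=\Phi(\Psi_0(\omega))$. Thus $\Psi[M]=\Phi(\Psi_0(\omega))$ depends only on the $\Aut_s(X)$-orbit of $\omega$, i.e.\ only on $[M]\in\sM(X)$, and $\Psi$ is well defined.

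I do not expect any real obstacle here: the argument is entirely formal once homotopy inverses are manipulated carefully. The one spot that deserves a word of care is the orientation bookkeeping in the second step — the assertion that $d^*[\nu_2]=[\nu_1]$ holds in $[M,BSO]$, and not merely after forgetting orientations — which is precisely why one invokes the standard convention that the simple structure set consists of orientation-preserving (degree one) homotopy equivalences, forcing the diffeomorphism $d$ to preserve orientation.
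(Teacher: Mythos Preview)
Your proof is correct and follows essentially the same idea as the paper's: both arguments ultimately reduce to the observation that any two presentations of an element of $\sM(X)$ differ, after pulling back normal bundles, by the action of an automorphism in $\overline{\Aut_s}(\KO(X))$, namely the one induced by the composite simple self-equivalence of $X$ built from the two homotopy inverses and the intervening diffeomorphism. The only difference is organizational: you separate the three ambiguities (choice of homotopy inverse, choice of representative in $\mathcal{S}^s(X)$, choice of $\Aut_s(X)$-orbit representative) and dispatch them one at a time, whereas the paper handles all three at once by taking a second manifold $K$ with an arbitrary diffeomorphism $t\colon K\to M$ and an arbitrary simple homotopy equivalence $h\colon X\to K$, and computing $h^*[\kappa]=h^*t^*\omega^*g^*[\nu]$ directly.
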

\begin{proof}
	Let $K$ be another manifold in the orbit $[M]$ with normal bundle $\kappa$, with a diffeomorphism $t:K\to M$ and with a simple homotopy equivalence $h:X\to K$. Since $t^*([\nu])=[\kappa]$, we have $h^*t^*([\nu])=h^*([\kappa])$. Since $\omega :M\to X$ is the homotopy inverse of $g$, we have $h^*([\kappa])=h^*t^*([\nu])=h^*t^*\omega^*g^*([\nu])$. Hence, $h^*([\kappa])$ and $g^*([\nu])$ differ by an automorphism  in $\overline{\Aut_s}(\KO(X))$ as the composition $ \omega\circ t\circ h$ is homotopic to a simple self homotopy equivalence of $X$. By definition, in $\mathfrak{K}(X)$ they are the same.	
\end{proof}

Let $p=2b+1$ be an odd prime. For a vector bundle, or in general a spherical fibration, $\xi$ over $X$, there exist cohomology classes $\mathtt{q}^p_k(\xi)$ in $ H^{4bk}(X;\bbZ/p)$, known as $\md p $ Wu classes, introduced in \cite{Wu}. We write $\mathtt{q}_k$ instead of $\mathtt{q}^p_k$ if the prime we consider is clear from the context. These classes are defined by the identity $\mathtt{q}_k(\xi)={\theta}^{-1}\cP^k\theta(1)$. Here, $\cP^n$  denotes the Steenrod's reduced $p$-th power operation and $\theta: H^{*}(X;\bbZ/p)\to  H^{*}(T\xi;\bbZ/p)$ denotes the Thom isomorphism. For more details on $\md p$ Wu classes we refer to \cite{milsta}, Ch.19. 

For each prime $p$, let  $\mathtt{q}^p_1(X)$ denote the negative of the $ \md p $ Wu class of $\nu_X$, the Spivak normal fibration of $X$ (which exists since $X$ is a finite complex, see \cite{spivak}). Given $S$ a set of primes, let $\KO(X)_{(S,\mathtt{q}_1)}$ denote the subset of $\KO(X)$ that consist of elements $\xi$ such that for each $p$ in $S$ the first $ \md p $ Wu class of $\xi$ satisfies the identity $\mathtt{q}^p_1(\xi)+\mathtt{q}^p_1(X)=0$ (or equivalently  $\mathtt{q}^p_1(\xi)=\mathtt{q}^p_1(\nu_X)$). Since the class $\mathtt{q}^p_1(X)$ is a homotopy type invariant of $X$ (see \cite{milsta} Ch. 19), the subset $\KO(X)_{(S,\mathtt{q}_1)}$ is invariant under the action of $\overline{\Aut_s}(\KO(X))$. We denote the quotient of this action by  $ \mathfrak{K}(X)_{(S,\mathtt{q}_1)} $. In particular, if $S=\emptyset$, then  $ \mathfrak{K}(X)_{(S,\mathtt{q}_1)}= \mathfrak{K}(X) $.

 \begin{notation}\label{notation}  $E^*_{*,*}({\xi})$ will denote the James spectral sequence for the stable homotopy as the generalized homology theory, identity fibration $id:X\to X$ and the stable bundle ${\xi:X\to BSO}$.  The abbreviations {\sf JSS} and {\sf AHSS} will be used for the James and Atiyah-Hirzebruch Spectral sequences respectively. For any finite spectrum $E$, $E^{\wedge}_{q}$ will denote the $q$-nilpotent completion of $E$ at the prime $q$ (also called localization at $\bbZ/q$, corresponding to localization at the Moore spectrum $M\bbZ_q$ of $\bbZ/q$), see \cite{bousfield}. 
\end{notation} 	
	
	\section{Main results}\label{prf}
Let $X$ be a simple Poincar\'{e} complex with formal dimension $m$. We impose the following condition on a  stable vector bundle $\xi:X\to BSO$:
\begin{condition}\label{cond1}
	For each $r\leq m$ the differential $d^{r}:E^r_{m,0}({\xi})\to E^r_{m-r,r-1}({\xi})$ in the James spectral sequence is zero.
\end{condition}
Observe that the image of the edge homomorphism of $E^*_{*,*}({\xi})$ in $ H_m(X,\bbZ)$ is the intersection of the kernels of all of the differentials with source $E^*_{m,0}({\xi})$, i.e. $im(\mathfrak{ed})=\bigcap_{r}\ker(d^{r})$. Thus, Condition \ref{cond1} implies that the group $E^2_{m,0}({\xi})=H_m(X;\bbZ)$ is equal to $E^\infty_{m,0}({\xi})$, i.e. edge homomorphism is surjective. For a given class $[\rho:M\to X]$ in $\Omega_m(\xi)$ we have $\mathfrak{ed}[\rho:M\to X]= \rho_*[M]$. Therefore, we can find a class $[\rho:M\to X]$ in $\Omega_m(\xi)$ such that $\mathfrak{ed}[\rho:M\to X]= \rho_*[M]$ is a generator of $H_m(X;\bbZ)$ with the preferred orientation. As a result, we get a degree one normal map $\rho:M\to X$, i.e. we have a surgery problem.

If Condition \ref{cond1} does not hold for $\xi$, i.e. we have a non-trivial differential $d^{r}:E^r_{m,0}({\xi})\to E^r_{m-r,r-1}({\xi})$ for some $r$, then the edge homomorphism can not be surjective. This means $\rho_*[M]$ can not be a generator of $H_m(X;\bbZ)$, i.e. $\rho$ can not be a degree one map. Hence, there is not a degree one normal map that represents a class in $\Omega_m(\xi)$. As a result, there is not a manifold simple homotopy equivalent to $X$ whose stable normal bundle lifts to $X$ through $\xi$. Hence, we have the following lemma:
\begin{lemma}\label{condlemma}
A stable vector bundle $\xi$ admits a degree one normal map if and only if Condition \ref{cond1} holds for $\xi$.
\end{lemma}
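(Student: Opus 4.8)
The plan is to recognize Lemma \ref{condlemma} as a direct translation, via the Pontryagin--Thom isomorphism $\Omega_m(\xi)\cong \pi_m(M\xi)$ and the base-line edge homomorphism $\mathfrak{ed}$ of Proposition \ref{edge}, of the single statement that the preferred generator $[X]\in H_m(X;\bbZ)$ lies in the image of $\mathfrak{ed}$. The one fact to start from is the identity $\operatorname{im}(\mathfrak{ed})=E^\infty_{m,0}(\xi)=\bigcap_{r\geq 2}\ker\big(d^r:E^r_{m,0}(\xi)\to E^r_{m-r,r-1}(\xi)\big)$, already recorded in the excerpt. Since $X$ is an oriented Poincar\'e complex of formal dimension $m$, the group $H_m(X;\bbZ)=E^2_{m,0}(\xi)$ is infinite cyclic, generated by the fundamental class $[X]$; this will be used in both directions.

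For the ``if'' direction, assume Condition \ref{cond1}. A differential leaving $E^r_{m,0}(\xi)$ lands in $E^r_{m-r,r-1}(\xi)$, which is zero as soon as $r>m$ for position reasons, so Condition \ref{cond1} in fact forces every differential out of $E^r_{m,0}(\xi)$ to vanish for all $r\geq 2$. Moreover the bottom-row entry in column $m$ receives no differentials, since a $d^r$ into it would originate from $E^r_{m+r,1-r}(\xi)=0$. Hence $E^\infty_{m,0}(\xi)=E^2_{m,0}(\xi)=H_m(X;\bbZ)$, so $\mathfrak{ed}$ is surjective; picking a class $[\rho:M\to X]\in\Omega_m(\xi)$ with $\mathfrak{ed}[\rho:M\to X]=\rho_*[M]=[X]$ yields a degree one normal map lifting through $\xi$, realized by an honest smooth $\xi$-manifold because $\pi_m(M\xi)$ classifies exactly such objects.

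For the ``only if'' direction, suppose $\xi$ admits a degree one normal map $\rho:M\to X$. This is a class in $\Omega_m(\xi)$ with $\mathfrak{ed}[\rho:M\to X]=\rho_*[M]=[X]$, so the generator $[X]$ lies in $E^\infty_{m,0}(\xi)\subseteq\cdots\subseteq E^r_{m,0}(\xi)\subseteq E^2_{m,0}(\xi)=H_m(X;\bbZ)$, the inclusions again because the bottom row in column $m$ receives no differentials. A subgroup of the infinite cyclic group $H_m(X;\bbZ)$ containing the generator $[X]$ must be the whole group, so $E^r_{m,0}(\xi)=H_m(X;\bbZ)$ for every $r$; since $[X]$ survives to $E^{r+1}_{m,0}(\xi)$ it lies in $\ker d^r$, and a homomorphism out of a cyclic group that annihilates a generator is zero. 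Thus $d^r=0$ for all $r\leq m$, i.e.\ Condition \ref{cond1} holds.

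I do not expect a genuine obstacle; the argument is bookkeeping with the James spectral sequence, much of it already spelled out in the paragraph preceding the lemma. The points deserving a word of care are: (i) that no differential enters $E^r_{m,0}(\xi)$, so each $E^r_{m,0}(\xi)$ is literally a subgroup of $H_m(X;\bbZ)$ and not a proper subquotient---this is what makes the cyclicity argument in the second half go through; (ii) that the range $r\leq m$ in Condition \ref{cond1} already exhausts the potentially non-zero differentials leaving $E^r_{m,0}(\xi)$; and (iii) that ``$\rho$ has degree one'' is precisely ``$\rho_*[M]$ is the preferred generator'', which is precisely ``$[X]\in\operatorname{im}(\mathfrak{ed})$''. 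The written proof will mainly consolidate the preceding discussion around these three observations.
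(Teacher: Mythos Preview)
Your proposal is correct and follows the same approach as the paper: the lemma is a direct restatement of the discussion immediately preceding it, using that $\operatorname{im}(\mathfrak{ed})=\bigcap_r\ker(d^r)$ and that $H_m(X;\bbZ)$ is infinite cyclic generated by $[X]$. Your write-up is in fact slightly more careful than the paper's, since you make explicit why a non-surjective edge homomorphism cannot hit the generator (cyclicity of $H_m(X;\bbZ)$), and you note that the position $(m,0)$ receives no incoming differentials so that each $E^r_{m,0}(\xi)$ is genuinely a subgroup rather than a subquotient of $H_m(X;\bbZ)$.
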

For the {\sf JSS}  for $\xi$, $E^*_{*,*}({\xi})$, there is a corresponding (isomorphic) {\sf AHSS} for the Thom spectrum $M\xi$, i.e. the  {\sf AHSS} whose $E^2$-page is $H_*(M\xi,\pi_*^S(*))$ which converges to $\pi_*(M\xi)$, with the isomorphism given by the Thom isomorphism. For a given prime $q$, it is well known that the $q$-primary part of $ \pi_k^S$ is zero whenever $0<k<2q-3$ (see  \cite{todastable}). We use finiteness of $\pi_k^S$ \cite{serre}. On each $\md q$ torsion part, the first non-trivial differentials of the {\sf AHSS} are given by the duals of the stable primary cohomology operations. Due to Wu formulas, when we pass to the {\sf JSS} we need to know  the action of Steenrod algebra on the Thom class. For $p= 2$ the action of Steenrod squares on the Thom class $U \in H^*(M\xi;\bbZ/2)$ is determined by the Stiefel-Whitney classes. In fact the $(\md 2)$ Wu formula asserts that $Sq^i(U)=U\cup w_i$ (see \cite{milsta}, p.91).

Let $S/q$ denote the homology theory given by $\bbS^{\wedge}_{q}$. Let $E$ be a spectrum. Consider the  {\sf AHSS} for the homology theory $S/q$, i.e. the coefficient groups  will be  $\pi_*(\bbS^{\wedge}_
{q} )$. Due to naturality of the {\sf AHSS}, the first non-zero differentials have to be stable primary cohomology operations independent of the generalized cohomology theory, see pp. 208 \cite{ahss}. For each $i$ with $0<i< 2q-3$ we have  $\pi_{i}(\bbS^{\wedge}_{q})=0$ and $\pi_{2q-3}(\bbS^{\wedge}_{q})=\bbZ/q$. Thus, the first non-trivial differential in this {\sf AHSS} appears at the $(2q-2)$-th page. This differential has to be a stable primary cohomology operation.  The only $\md q$ operations in this range are $0$ and dual of the $\md q$ Steenrod operation $\cP^1$. As in the proof of Lemma in  \cite[pp. 751]{teichner}, letting $E=\Sigma^{2q-2}H\bbZ/p$ as a test case, one can see that $d^{2q-2}$ is not always zero. The $d^2$ differential in $E^*_{*,*}({\xi})$  is given by the dual of the map $ x\mapsto Sq^2(x)+ w_2(\xi) \cup x$, see \cite{teichner} Proposition 1. Let us write $\mathtt{q}_1$ for $\mathtt{q}_1^q$, where $q$ is a fixed odd prime. We obtain a similar formula for the first non-zero differentials in $E^*_{*,*}({\xi})$ acting on $ \md q $ torsion part. 
\begin{lemma}\label{d2q-2}
 For each $n\geq 2q-2$ the differential on the $ \md q $ part $\ d^{2q-2}:E^{2q-2}_{n,0}({\xi})\to E^{2q-2}_{n-2q+2,2q-3}({\xi})$ is equal to the dual of the map $$\delta:H^{n-2q+2}(X;\bbZ/q)\to H^{n}(X;\bbZ/q)$$ defined as $ x\mapsto \cP^1(x)+ \mathtt{q}_1(\xi) \cup x$, composed with $\md q$ reduction.
	\end{lemma}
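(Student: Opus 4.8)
The plan is to compute the first non-trivial differential in $E^*_{*,*}({\xi})$ on the $\md q$ torsion part by transporting the known differential from the {\sf AHSS} of $M\xi$ via the Thom isomorphism, exactly as in the $p=2$, $d^2$ case recorded in \cite{teichner} Proposition 1. First I would fix the odd prime $q$ and localize everything at $q$; by the discussion preceding the lemma (finiteness of $\pi^S_*$, and $\pi_i(\bbS^\wedge_q) = 0$ for $0 < i < 2q-3$ with $\pi_{2q-3}(\bbS^\wedge_q) = \bbZ/q$), the first potentially non-zero differential on the $\md q$ part lands on page $2q-2$, and by naturality of the {\sf AHSS} it must be a stable primary $\md q$ cohomology operation raising degree by $2q-2$. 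In this range the only such operations are $0$ and $\mathcal P^1$ (up to scalar), so the differential in the {\sf AHSS} for $M\xi$ is the dual of $U \cup y \mapsto \mathcal P^1(U \cup y)$ on the relevant line (modulo the Bockstein/reduction bookkeeping); that it is genuinely non-zero in general is seen by the test-case argument with $E = \Sigma^{2q-2} H\bbZ/q$ as on \cite[pp.~751]{teichner}.

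Next I would apply the Cartan formula to $\mathcal P^1(U \cup y)$. Since $|\mathcal P^1| = 2q-2 = 2(p-1)$ with $p = q$, only the terms $\mathcal P^0 U \cup \mathcal P^1 y$ and $\mathcal P^1 U \cup \mathcal P^0 y$ survive, giving $\mathcal P^1(U \cup y) = U \cup \mathcal P^1(y) + \mathcal P^1(U) \cup y$. By the definition of the $\md q$ Wu classes recalled in Section \ref{prelim}, $\mathcal P^1(U) = \theta(\mathtt q^q_1(\xi) \cdot (\text{unit}))$; more precisely $\theta^{-1}\mathcal P^1 \theta(1) = \mathtt q_1(\xi)$, so $\mathcal P^1(U) = U \cup \mathtt q_1(\xi)$ under the Thom isomorphism $\theta(x) = U \cup \bar x$. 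Transporting back through $\theta$ then turns the {\sf AHSS} differential for $M\xi$ into the claimed differential in $E^*_{*,*}({\xi})$: its dual is $x \mapsto \mathcal P^1(x) + \mathtt q_1(\xi) \cup x$ on $H^{n-2q+2}(X;\bbZ/q) \to H^n(X;\bbZ/q)$. The $\md q$ reduction is needed because the $E^2$-term on this line is $H_{n-2q+2}(X;\pi_{2q-3}(\bbS^\wedge_q)) = H_{n-2q+2}(X;\bbZ/q)$ while the source $E^{2q-2}_{n,0}$ is a subquotient of $H_n(X;\bbZ)$, so one composes with the coefficient map $\bbZ \to \bbZ/q$; dualizing interchanges source and target, which accounts for the way $\delta$ is stated with source in cohomological degree $n-2q+2$.

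Finally I would check the range of validity: for $n \geq 2q-2$ the group $E^{2q-2}_{n,0}({\xi})$ is defined and equals $E^2_{n,0}$ modulo images of earlier differentials, all of which vanish on the $\md q$ part since $\pi_i(\bbS^\wedge_q) = 0$ for $1 \le i < 2q-3$; hence $d^{2q-2}$ really is the first non-trivial differential there and the identification above is complete. The main obstacle I expect is purely bookkeeping rather than conceptual: pinning down the signs and the exact interplay between integral and $\md q$ coefficients (the reduction map, and which side of the duality carries the $\md q$ coefficients) so that the formula comes out as stated; the Cartan-formula computation of $\mathcal P^1(U\cup y)$ and the invocation of the Wu-class definition are routine once the $p=2$ case in \cite{teichner} is taken as a template.
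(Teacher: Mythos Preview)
Your proposal is correct and follows essentially the same route as the paper: identify $d^{2q-2}$ in the $q$-local {\sf AHSS} for $M\xi$ with the dual of $\mathcal P^1$, then transport through the Thom isomorphism using the Cartan formula and the definition $\mathcal P^1(U)=U\cup \mathtt q_1(\xi)$ to obtain the stated map $\delta$. Your write-up is in fact more careful than the paper's about the bookkeeping (why earlier differentials vanish on the $\md q$ part, the role of the $\md q$ reduction, the range $n\geq 2q-2$), but the argument is the same.
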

	\begin{proof}
 Consider the {\sf AHSS} for $M\xi$ and $S/q$. In this case the coefficient groups of the {\sf AHSS} will be  $\pi_*(\bbS^{\wedge}_
	{q})$ and it will converge to  $\pi_*(M\xi^{\wedge}_
	{q})$. From above remarks, the differential $d^{2q-2}$ in the {\sf AHSS} for $M\xi$ and $S/q$, is the dual of the $\md q$ Steenrod operation  $$\cP^1:H^{n-2q+2}(M\xi,\bbZ/q)\to H^n(M\xi,\bbZ/q).$$  By the Thom isomorphism theorem an element of $H^*(M\xi,\bbZ/q)$ is of the form $U\cup \sigma$ where $\sigma \in H^*(X;\bbZ/q)$ and $U$ is the Thom class. On the passage to the {\sf JSS}, Cartan's formula implies $$\cP^1(U\cup \sigma)=U\cup \cP^1(\sigma) + \cP^1(U)\cup \sigma=U\cup \cP^1(\sigma)+U\cup \mathtt{q}_1(\xi)\cup \sigma$$ hence in the James spectral sequence these differentials become duals of the maps $\ \sigma\mapsto \cP^1(\sigma)+\mathtt{q}_1(\xi)\cup \sigma$ composed with $\md q$ reduction. 
\end{proof}
%More geometric way to see that this differential is given as above is the following: the first non-trivial stable homotopy classes are detected by the first Steenrod operations $\cP^1$ for each prime, so that the attaching map $S^k\cup e^{2p-3+k}$ give a generator for $\Sigma^kM\xi$ for sufficiently large $k$, returning to $X$ by Thom isomorphism, we get the above formula. 
 
We have the following lemma for the differential $d^m$ with source $E^m_{m,0}({\xi})$:
\begin{lemma}\label{dm}
Let $q$ be a prime and $m$ be an odd number. Let  $X$ be a $\bbZ/q$-homology sphere with a given $\bbZ/q$-homology isomorphism $f:S^m\to X$. Then for any stable vector bundle $\xi:X\to BSO$ that is in the kernel of $f^*:\KO(X)\to \KO(S^m)$, the image of the differential $d^m$ in  $E^m_{0,m-1}({\xi})$ has trivial $q$-torsion.
\end{lemma}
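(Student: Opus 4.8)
The plan is to reduce the assertion to a $q$-local statement and then deduce the vanishing of the relevant differential by naturality along the map of Thom spectra induced by $f$. Write $(-)_{(q)}$ for localisation at the prime $q$ (i.e.\ $-\otimes_{\bbZ}\bbZ_{(q)}$). Since $m\geq 2$, the group $E^2_{0,m-1}(\xi)=H_0(X;\pi_{m-1}^S)=\pi_{m-1}^S$ is finite \cite{serre}, hence so is its subquotient $E^m_{0,m-1}(\xi)$ and the image of $d^m$ in it; for a finite abelian group ``trivial $q$-torsion'' means exactly ``its $q$-localisation is zero''. So it suffices to prove that the localised differential $d^m_{(q)}\colon E^m_{m,0}(\xi)_{(q)}\to E^m_{0,m-1}(\xi)_{(q)}$ is zero.

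Next I pin down the $q$-local shape of $E^*_{*,*}(\xi)$. Because $X$ is a $\bbZ/q$-homology $m$-sphere and a finite complex, a routine universal-coefficients/Bockstein argument shows that $X$ is in fact a $\bbZ_{(q)}$-homology $m$-sphere; together with the universal-coefficient theorem this gives $E^2_{s,t}(\xi)_{(q)}=H_s(X;\pi_t^S)_{(q)}=0$ for every $0<s<m$. Consequently, after localising at $q$, every differential of the {\sf JSS} has either its source or its target among these vanishing entries \emph{except} for the differentials out of column $m$ into column $0$; in particular $d^m\colon E^m_{m,0}(\xi)\to E^m_{0,m-1}(\xi)$ is the only differential touching $E^m_{m,0}(\xi)$ or $E^m_{0,m-1}(\xi)$ after localisation, so that $E^m_{m,0}(\xi)_{(q)}=E^2_{m,0}(\xi)_{(q)}=H_m(X;\bbZ)_{(q)}\cong\bbZ_{(q)}$ and $E^m_{0,m-1}(\xi)_{(q)}=\pi_{m-1}^S\otimes\bbZ_{(q)}$. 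The identical discussion applies over $S^m$ (where it is trivially a $\bbZ_{(q)}$-homology sphere).

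Now I bring in $f$. Since $\xi$ lies in $\ker(f^*\colon\KO(X)\to\KO(S^m))$, the stable bundle $f^*\xi$ over $S^m$ is trivial, so $M(f^*\xi)\simeq\Sigma^\infty_+ S^m\simeq\bbS\vee\Sigma^m\bbS$; the {\sf AHSS} of this wedge is the direct sum of the {\sf AHSS}'s of $\bbS$ and of $\Sigma^m\bbS$, each concentrated in a single column, hence it degenerates at $E^2$ and \emph{all} its differentials vanish --- in particular the $d^m$ out of its $(m,0)$-entry. The canonical bundle map $f^*\xi\to\xi$ over $f$ induces a map of Thom spectra $Mf\colon M(f^*\xi)\to M\xi$, hence a morphism of the associated (Atiyah--Hirzebruch $=$ James) spectral sequences; by naturality of the Thom isomorphism it is given on $E^2$-pages by $f_*\colon H_*(S^m;\pi_*^S)\to H_*(X;\pi_*^S)$, so on $E^2_{m,0}$ it is multiplication by $\deg f$, because $f_*[S^m]=(\deg f)[X]$. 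As $f$ is a $\bbZ/q$-homology isomorphism and $H_m(X;\bbZ/q)\cong\bbZ/q$, the integer $\deg f$ is prime to $q$. By the previous paragraph both spectral sequences satisfy $E^m_{m,0}=E^2_{m,0}$ after localising at $q$, so the localised morphism $(Mf)_*\colon E^m_{m,0}(M(f^*\xi))_{(q)}=\bbZ_{(q)}\to E^m_{m,0}(\xi)_{(q)}=\bbZ_{(q)}$ is still multiplication by the $q$-local unit $\deg f$, hence surjective. Finally, naturality of $d^m$ gives
\[
d^m_{(q)}\circ (Mf)_* \;=\; (Mf)_*\circ d^m_{(q)} \;=\; 0
\]
on $E^m_{m,0}(M(f^*\xi))_{(q)}$, and since $(Mf)_*$ maps onto $E^m_{m,0}(\xi)_{(q)}$ this forces $d^m_{(q)}=0$, as required.

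I expect the delicate point to be not any single computation but the $q$-local bookkeeping in the second paragraph: one must check carefully that, after localising at $q$, \emph{no} differential other than the $d^m$'s from column $m$ to column $0$ survives, since it is precisely this that makes the collapse ``$E^2\Rightarrow E^m$'' automatic on both sides and lets the elementary identity $(Mf)_*=\deg f$ on $E^2_{m,0}$ propagate unchanged to the $E^m$-page. Everything else --- the triviality of $M(f^*\xi)$ as a wedge, and $\deg f$ being a $q$-unit --- is immediate from the hypotheses.
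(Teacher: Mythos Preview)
Your argument is correct, and the core strategy---pull the bundle back along $f$, observe that $M(f^{*}\xi)\simeq\bbS\vee\Sigma^{m}\bbS$ so its spectral sequence collapses, and then compare via the induced map of Thom spectra---is exactly the paper's strategy. The difference is in how the comparison is executed. The paper invokes Bousfield's theorem to conclude that $Mf^{\wedge}_{q}\colon M\epsilon^{\wedge}_{q}\to M\xi^{\wedge}_{q}$ is a homotopy equivalence (since $f$, and hence $Mf$, is a $\bbZ/q$-homology isomorphism), and then reads off the vanishing of $q$-torsion in the image of $d^{m}$ from this equivalence. You instead work entirely with the flat $\bbZ_{(q)}$-localisation of the spectral sequences: you note that after $q$-localising, both sequences have only columns $0$ and $m$, so $E^{m}_{m,0}=E^{2}_{m,0}$ on both sides, that $(Mf)_{*}$ on $E^{2}_{m,0}$ is multiplication by the $q$-unit $\deg f$, and that naturality of $d^{m}$ then forces $d^{m}_{(q)}=0$.

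Your route is more elementary: it sidesteps the appeal to $q$-completion and \cite{bousfield}, \cite{bousfieldkan}, \cite{BKbulletin} entirely, needing only that localisation at $q$ is exact and that $\deg f$ is prime to $q$. The paper's route, on the other hand, yields a slightly stronger intermediate statement (that $Mf$ is an equivalence after $q$-completion), which it implicitly reuses later (e.g.\ in Corollary~\ref{ed}). Your self-identified ``delicate point''---that after $q$-localisation no differential but those from column $m$ to column $0$ survives---is handled cleanly by your $\bbZ_{(q)}$-homology-sphere observation and is not a genuine gap.
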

 
\begin{proof}
 Let  $\epsilon:S^m\to BSO$ be the stable vector bundle given by the composition $\xi\circ  f$. The map $f:S^m\to X$ induces a map of spectra $Mf:M\epsilon \to M\xi$. Since $f$ is a $\bbZ/q$-homology isomorphism, the induced map is also $\bbZ/q$-homology isomorphism, due to Thom isomorphism. Both $M\xi $ and $M\epsilon$ are connective spectra and of finite type. The space $X$ is $q$-good (see Definition I.5.1 in \cite{bousfieldkan}) due to 5.5 of \cite{BKbulletin}. Thus, the induced map $Mf^{\wedge}_{q}:M\epsilon^{\wedge}_{q} \to M\xi^{\wedge}_{q}$ is a homotopy equivalence (see \cite{bousfield} Proposition 2.5 and Theorem 3.1). We have $f^*(\xi)=0 $ in $\KO(S^m)$. Hence, $\epsilon$ is a trivial bundle. The Thom spectrum $M\epsilon $ is then homotopy equivalent to the wedge of spectra $\bbS \vee \Sigma^m \bbS$, as it is the suspension spectrum of $S^m\vee S^0$. Recall that  $E^*_{*,*}({\epsilon})$ is the  {\sf JSS} for the identity fibration $S^m\to S^m $ and the trivial stable vector bundle. Hence,  $E^*_{*,*}({\epsilon})$ collapses on the second page. As a result, $q$-torsion in $E^m_{0,m-1}({\epsilon})$ survives to the $E^\infty_{0,m-1}({\epsilon})$. The result follows by comparing the $q$-torsion in $E^m_{0,m-1}({\epsilon})$, via $Mf$, with the $q$-torsion in  $E^m_{0,m-1}({\xi})$.\end{proof}

The following lemma is a partial converse of Lemma \ref{dm}:
\begin{lemma}\label{notinker}
Assume $q=2$ and  $m=1(\md 8)$ in Lemma \ref{dm}. Then, if $\xi\notin \ker(f^*)$, then Condition \ref{cond1} does not hold for $E^*_{*,*}({\xi})$.
\end{lemma}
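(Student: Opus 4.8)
The plan is to show that Condition \ref{cond1} fails for $\xi$ at the top page, by producing a non-zero differential $d^{m}\colon E^{m}_{m,0}({\xi})\to E^{m}_{0,m-1}({\xi})$; we may assume the differentials $d^{r}$ with $r<m$ all vanish, since otherwise there is nothing to prove, and then $E^{m}_{m,0}({\xi})=E^{2}_{m,0}({\xi})=H_{m}(X;\bbZ)\cong\bbZ$. The key observation is that since $m\equiv 1\,(\md 8)$ the group $\KO(S^{m})=\pi_{m}(BSO)=\pi_{m-1}(SO)$ is cyclic of order $2$ (as $m-1\equiv 0\,(\md 8)$), so if $\xi\notin\ker(f^{*})$ then $\epsilon:=\xi\circ f\colon S^{m}\to BSO$ is the non-trivial element, and, unlike in Lemma \ref{dm}, its Thom spectrum is a genuine two-cell complex. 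Indeed, the Thom space of a stable bundle over $S^{m}$ is the mapping cone of the class obtained from the clutching datum via the $J$-construction, so after desuspending $M\epsilon\simeq\bbS\cup_{\alpha}e^{m}$, where $\alpha=J(\epsilon)\in\pi_{m-1}(\bbS)$ is the image of the class of $\epsilon$ in $\pi_{m-1}(SO)$ under the stable $J$-homomorphism. By Adams's computation of the image of $J$ in degrees $\equiv 0\,(\md 8)$ (equivalently, non-vanishing of the real $e$-invariant on $\pi_{m-1}(SO)\cong\bbZ/2$), the element $\alpha$ is non-zero, of order $2$.

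Next I would read off the relevant differential from $E^{*}_{*,*}({\epsilon})$, the {\sf JSS} for the identity fibration $S^{m}\to S^{m}$ and $\epsilon$, which is the {\sf AHSS} of $M\epsilon=\bbS\cup_{\alpha}e^{m}$. Its $E^{2}$-page is supported on the lines $s=0$ and $s=m$, so no differential touches $E^{2}_{m,0}({\epsilon})$ or $E^{2}_{0,m-1}({\epsilon})$ before the $m$-th page, where $d^{m}\colon E^{m}_{m,0}({\epsilon})=\bbZ\to E^{m}_{0,m-1}({\epsilon})=\pi_{m-1}(\bbS)$ is composition with the attaching map, $1\mapsto\alpha$; in particular it is non-zero, with image the order-two subgroup generated by $\alpha$. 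Since $f$ is a $\bbZ/2$-homology isomorphism, so is $Mf\colon M\epsilon\to M\xi$ (Thom isomorphism), and — exactly as in the proof of Lemma \ref{dm}, using that $X$ is $2$-good and that $M\epsilon$, $M\xi$ are connective of finite type — the map $Mf^{\wedge}_{2}$ is an equivalence, so $Mf$ induces isomorphisms on $\bbZ/2$-homology and on the $2$-primary parts of homotopy. I would then compare the two spectral sequences through the morphism induced by $Mf$ (equivalently by the map of fibration/bundle data $f$): on $E^{2}_{m,0}$ it is $f_{*}\colon H_{m}(S^{m};\bbZ)\to H_{m}(X;\bbZ)\cong\bbZ$, multiplication by an odd integer $d$ (odd because its mod $2$ reduction is an isomorphism), and on $E^{2}_{0,m-1}$ it is the identity of $\pi_{m-1}(\bbS)$.

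What remains, and what I expect to be the main obstacle, is to be sure the passage from $\epsilon$ to $\xi$ does not lose the order-two class $\alpha$: here the point is that $X$ is only a $\bbZ/2$-homology sphere, so $M\xi$ may carry odd torsion in integral homology that is invisible over $\bbZ/2$. This is controlled by a parity count. From the universal coefficient theorem, $H_{r}(X;\bbZ)$ is finite of odd order for $0<r<m$; hence every entry $E^{r}_{r,m-r}({\xi})$ with $0<r<m$ has odd order, so all differentials hitting $E^{2}_{0,m-1}({\xi})$ before page $m$ have odd-order image and the $2$-primary part of $E^{m}_{0,m-1}({\xi})$ is carried isomorphically from that of $E^{2}_{0,m-1}({\xi})=\pi_{m-1}(\bbS)$. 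Therefore the induced map $E^{m}_{0,m-1}({\epsilon})\to E^{m}_{0,m-1}({\xi})$ (a quotient by an odd-order subgroup) is injective on $2$-torsion, and naturality of $d^{m}$ gives, in $E^{m}_{0,m-1}({\xi})$, the relation $d\cdot d^{m}(1)=(\text{induced map})(\alpha)\neq 0$, where $1$ generates $E^{m}_{m,0}({\xi})\cong\bbZ$; since $d\neq 0$ this forces $d^{m}\colon E^{m}_{m,0}({\xi})\to E^{m}_{0,m-1}({\xi})$ to be non-zero, so Condition \ref{cond1} does not hold for $E^{*}_{*,*}({\xi})$. Besides this bookkeeping, the two inputs one has to be comfortable invoking are the identification $M\epsilon\simeq\bbS\cup_{J\epsilon}e^{m}$ and Adams's theorem that $J\epsilon\neq 0$.
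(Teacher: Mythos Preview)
Your argument is correct, but it follows a genuinely different route from the paper's. The paper argues by contradiction through surgery theory: assuming Condition~\ref{cond1} holds for $\xi$, a comparison along $f$ shows it also holds for the nontrivial bundle $\mu=f^{*}\xi$ over $S^{m}$; then a degree one normal map into $S^{m}$ exists in $\Omega_{m}(\mu)$, and since $L^{s}_{m}(\bbZ)=0$ one may surger it to a simple homotopy equivalence $\tilde{\rho}\colon \tilde{M}\to S^{m}$. This forces $\tilde{M}$ to be a homotopy sphere with nontrivial stable normal bundle $\mu\circ\tilde{\rho}$, contradicting the Kervaire--Milnor theorem that homotopy spheres are stably parallelizable.

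Your approach instead computes the differential directly: you identify $M\epsilon\simeq \bbS\cup_{J\epsilon}e^{m}$, invoke Adams's theorem that $J$ is injective on $\pi_{m-1}(SO)\cong\bbZ/2$ for $m\equiv 1\ (\md\ 8)$ to get $\alpha=J\epsilon\neq 0$, read off $d^{m}_{\epsilon}(1)=\alpha$ from the two-cell structure, and push this through $Mf$ using the odd-order bookkeeping to conclude $d^{m}_{\xi}\neq 0$. Both arguments ultimately lean on the same circle of ideas (the deep input in Kervaire--Milnor's stable parallelizability is closely tied to the nontriviality of $J$), but the paper's version is more geometric and avoids unpacking the Thom spectrum of $\epsilon$, while yours is more explicit about which differential fails and why, and makes the ``comparison as in Lemma~\ref{dm}'' step completely transparent. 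Your careful parity analysis of the intermediate groups $E^{2}_{r,m-r}(\xi)$ is exactly what is needed to justify that comparison on the $2$-primary part.
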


\begin{proof} Suppose that $\xi\notin \ker(f^*)$ and Condition \ref{cond1} holds for  $E^*_{*,*}({\xi})$. Let $\xi\circ f=\mu:S^m\to BSO$ be the nontrivial element in $\KO(S^m)$. Then, by a comparison as in the proof of Lemma \ref{dm}, Condition \ref{cond1} holds for $E^*_{*,*}({\mu})$. Thus, there is a degree one normal map $\rho:M\to S^m$ representing a class in $\Omega_m(\mu)$. Since $L^s_m(\bbZ)=0$, see \cite{kervmiln}, every such map can be surgered to a simple homotopy equivalence, $\tilde\rho:\tilde{M}\to S^m$. However, it is well known that every homotopy sphere is stably parallelizable (see \cite{kervmiln}). Hence, we get a contradiction, as $\mu\circ \tilde\rho$ is nontrivial in $\KO(S^m)$.
	\end{proof}

\begin{proof}[Proof of Theorem \ref{main}]
Since both $|\pi_1(X)|$ and $m$ are odd, due to Theorem 1 in \cite{bak}, the surgery obstruction groups vanish. Hence, every degree one normal map can be surgered to a homotopy equivalence. We will show that elements in $\KO(X)_{(S,\mathtt{q}_1)}$ are the ones that admit a degree one normal map.

Let $[\xi]$ be an orbit in $\mathfrak{K}(X)_{(S,\mathtt{q}_1)}$ represented by $\xi:X\to BSO$.  Consider the James spectral sequence, $E^*_{*,*}(\xi)$. Let $q$ be a prime with $ q < (m+2)/2 $ and $q\notin S$. Then $X$ is a $\bbZ/q$-homology sphere. By Lemma \ref{dm} the image of $d^m$ has trivial $q$-torsion. Since $E^r_{m-r,r-1}(\xi)=H_{m-r}(X;\pi_{r-1}^S)$ does not contain $q$-torsion when $r<m$, image of the differentials $d^r:E^r_{m,0}(\xi)\to E^r_{m-r,r-1}(\xi)$ have trivial $q$-torsion as well. Hence, all of the differentials based at $E^r_{m,0}(\xi)$ have trivial $q$-torsion in their images.

Now, let $p\in S$. Then $ 2p-2< m< 4p-4 $. It is well known that for $t<4p-5$ the $p$-torsion in $\pi_t^S$ vanishes except when $t=2p-3$ (see for example \cite{todastable}, III Theorem 3.13, B). Since $2p-2<m <4p-4 $, $E^m_{0,m-1}$ has trivial $p$-torsion, we have $d^m=0$. Hence, the only differential whose image can contain mod-$p$ torsion appears at degree $2p-2$. By Lemma \ref{d2q-2}, the differential  $d^{2p-2}$ on the $p$-torsion part is equal to the dual of the map $$\delta:H^{m-2p+2}(X;\bbZ/p)\to H^{m}(X;\bbZ/p)$$ defined as $ x\mapsto \cP^1(x)+ \mathtt{q}_1(\xi) \cup x$, composed with $(\md p)$ reduction. Let $x$ be an element in $H^{m-2p+2}(X;\bbZ/p)$. By Poincar\'{e} duality, there exists an element $s$ in $H^{2p-2}(X;\bbZ/p)$ such that $\cP^1(x)= s\cup x$ (see \cite{hirzebruch} Section 2). By definition $s=\mathtt{q}_1(X)$.  Then  $d^{2p-2}$ is trivial on mod-$p$ torsion as $\xi$ is an element in $\KO(X)_{(S,\mathtt{q}_1)}$, i.e. as $\mathtt{q}_1^p(X)+\mathtt{q}_1^p(\xi)=0$.
 
Now, assume that  $\mathtt{q}_1^p(X)+\mathtt{q}_1^p(\xi)\neq 0$. Then by  Poincar\'{e} duality there exist an element $a\in H^{m-2p+2}(X;\bbZ/p)$ such that $a\cup (\mathtt{q}_1^p(X)+\mathtt{q}_1^p(\xi))$ is nonzero in $H^{m}(X;\bbZ/p)$, i.e. $d^{2p-2}\neq 0$.

As a result, Condition \ref{cond1} holds for $\xi$ if and only if $\xi\in \KO(X)_{(S,\mathtt{q}_1)}\cap \ker(f^*)$. By Lemma \ref{condlemma}   $\xi\in \KO(X)_{(S,\mathtt{q}_1)}\cap \ker(f^*)$ if and only if $\xi$ admits a degree one normal map.  Hence, we can do surgery and obtain a smooth manifold $M$ with a simple homotopy equivalence $\omega: M\to X$ that represent a class in $\Omega_m(\xi)$ if and only if  $\xi\in \KO(X)_{(S,\mathtt{q}_1)}\cap \ker(f^*)$. By Lemma \ref{dm}, the image of $\Psi $ consists of orbits in $ \mathfrak{K}(X)_{(S,\mathtt{q}_1)} $ represented by elements in the kernel of $f^*:\KO(X)\to \KO(S^m)$.

In the case when $m\neq 1(\md 8)$ we have $\KO(S^m)=0$ by Bott periodicity, i.e.  image of $\Psi$ is $ \mathfrak{K}(X)_{(S,\mathtt{q}_1)} $. If $S=\emptyset$, then $ \mathfrak{K}(X)_{(S,\mathtt{q}_1)}=\mathfrak{K}(X)$, i.e.  $\Psi$ is a surjection. This completes the proof. \end{proof}

The following corollary is essentially stronger than the main result.
\begin{corollary}\label{freeproduct}
	Let $m$ and $S$ be as in Theorem \ref{main}. Suppose that $X$ and $X'$ are $\bbZ/q$-homology $m$-spheres fitting into a zigzag $\displaystyle S^m\stackrel{f}\to X'\stackrel{g}\leftarrow X$, so that both $f$ and $g$ are $\bbZ/q$-homology isomorphisms for every prime $q< (m+2)/2$ with $q\notin S$. If $\pi_1(X)$ is a free product of finitely many odd order groups, then the image of $\Psi$ contains all orbits in $\mathfrak{K}(X)_{(S,\mathtt{q}_1)} $ which are represented by elements in $ g^*(\ker(f^*))$, where $f^*$ and $g^*$ are the induced maps $\displaystyle \KO(S^m)\stackrel{f^*}\leftarrow \KO(X')\stackrel{g^*}\to \KO(X)$. 
\end{corollary}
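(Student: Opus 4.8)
The plan is to re-run the proof of Theorem \ref{main} essentially verbatim, changing only the two places where the precise hypotheses of that theorem are used: the vanishing of the surgery obstruction group, and the input of Lemma \ref{dm}. For the first, I would verify that $L^s_m(\bbZ[\pi_1(X)])=0$ for $m$ odd, which ensures that every degree one normal map over $X$ can still be surgered to a simple homotopy equivalence. Writing $\pi_1(X)=G_1\ast\cdots\ast G_k$ with each $G_i$ of odd order, I argue by induction on $k$ via $G_1\ast\cdots\ast G_k=G_1\ast(G_2\ast\cdots\ast G_k)$, invoking Cappell's splitting theorem for the surgery obstruction groups of a free product: because the augmentation $\bbZ[G]\to\bbZ$ splits the inclusion $\bbZ\hookrightarrow\bbZ[G]$, the group $L^s_m(\bbZ[A\ast B])$ is built from $L^s_m(\bbZ)$, $L^s_m(\bbZ[A])$, $L^s_m(\bbZ[B])$ and a unitary nilpotent summand. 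This last summand vanishes because no group occurring in the induction has an element of order two: by Kurosh's theorem the torsion elements of a free product are conjugate into its free factors, and each $G_i$ has odd order. Since $L^s_m(\bbZ)=0$ for $m$ odd \cite{kervmiln} and $L^s_m(\bbZ[G_i])=0$ by \cite{bak}, the induction yields $L^s_m(\bbZ[\pi_1(X)])=0$.

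The second ingredient is a zigzag version of Lemma \ref{dm}: if $\xi=g^*(\eta)$ with $\eta\in\KO(X')$ and $f^*(\eta)=0$, then for every prime $q<(m+2)/2$ with $q\notin S$ the image of $d^m$ in $E^m_{0,m-1}(\xi)$ has trivial $q$-torsion. The proof is that of Lemma \ref{dm}, with the single map $S^m\to X$ replaced by the span of Thom spectra $M\xi\xrightarrow{Mg}M\eta\xleftarrow{Mf}M\epsilon$, where $\epsilon:=f^*(\eta)$ is stably trivial, so $M\epsilon\simeq\bbS\vee\Sigma^m\bbS$ and $E^*_{*,*}(\epsilon)$ collapses at the second page. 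For the primes in question both $f$ and $g$ are $\bbZ/q$-homology isomorphisms, hence so are $Mf$ and $Mg$ by the Thom isomorphism, so $M\xi^{\wedge}_{q}\simeq M\eta^{\wedge}_{q}\simeq M\epsilon^{\wedge}_{q}$; consequently the $q$-completion of $E^*_{*,*}(\xi)$ is isomorphic to that of $E^*_{*,*}(\epsilon)$, which collapses, and since the target of $d^m$ is finite this forces the image of $d^m$ in $E^m_{0,m-1}(\xi)$ to have no $q$-torsion.

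With these two substitutions the argument of Theorem \ref{main} applies without further change. Suppose an orbit of $\mathfrak{K}(X)_{(S,\mathtt{q}_1)}$ is represented by some $\xi\in g^*(\ker f^*)$; since $\KO(X)_{(S,\mathtt{q}_1)}$ is invariant under $\overline{\Aut_s}(\KO(X))$ the entire orbit lies in it, so $\xi\in\KO(X)_{(S,\mathtt{q}_1)}\cap g^*(\ker f^*)$. For a prime $q\notin S$ with $q<(m+2)/2$, the differentials $d^r$ with $1\le r<m$ land in $H_{m-r}(X;\pi_{r-1}^S)$, which has no $q$-torsion since $X$ is a $\bbZ/q$-homology sphere, while $d^m$ is controlled by the zigzag lemma above; for $q\ge (m+2)/2$ the coefficient groups $\pi_{r-1}^S$ with $r\le m$ have no $q$-torsion at all; and for $q\in S$ the only differential that can contribute $q$-torsion is $d^{2q-2}$, which vanishes by Lemma \ref{d2q-2} together with the identity $\mathtt{q}_1^q(\xi)=\mathtt{q}_1^q(\nu_X)$. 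Hence Condition \ref{cond1} holds for $\xi$, so by Lemma \ref{condlemma} it admits a degree one normal map; surgering this map to a simple homotopy equivalence $\omega:M\to X$ representing a class in $\Omega_m(\xi)$ yields a manifold with $\Psi[M]=\Phi(\xi)$, which establishes the asserted inclusion. The main obstacle is the first step: one must invoke Cappell's splitting with the simple ($s$) decoration and check that the unitary nilpotent contributions are killed by the absence of $2$-torsion; everything else is a formal extension of the proof of Theorem \ref{main}.
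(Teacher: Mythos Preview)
Your proof is correct and follows essentially the same approach as the paper: the paper also appeals to Cappell's result (its ``Theorem 5 in \cite{cappell}'') for the vanishing of the odd-dimensional $L$-groups of a free product of odd-order groups, and likewise proves the analogue of Lemma \ref{dm} by comparing $E^*_{*,*}(g^*(\xi'))$ with $E^*_{*,*}(\xi')$ via $g$ and then invoking the original comparison with $S^m$ via $f$. Your write-up is more explicit about the Cappell step (spelling out the induction, Kurosh, and why the UNil contributions vanish in the absence of $2$-torsion) and treats the Wu-class condition directly over $X$ rather than over $X'$, but the substance is the same.
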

\begin{proof}
	Assume $q\notin S$ with  $q< (m+2)/2$. For a bundle $\xi'$ over $X'$, Condition \ref{cond1} holds for $\xi'$ if and only if $\xi'\in \KO(X')_{(S,\mathtt{q}_1)}\cap \ker(f^*)$ for each $q$. One can compare spectral sequences for $\xi'$ and $g^*(\xi')$ as in the proof of Lemma \ref{dm}, and show that $d^m$ differential is trivial on $E^r_{m,0}(g^*(\xi'))$ whenever $\xi'\in \ker(f^*)$ for each such prime $q$. Thus, Condition \ref{cond1} holds for  $g^*(\xi')$, whenever $\xi'\in \KO(X')_{(S,\mathtt{q}_1)}\cap \ker(f^*)$. Repeating the arguments of Lemma \ref{notinker}, one can show that if $\xi'\notin \ker(f^*)$, then Condition \ref{cond1} does not hold for $g^*(\xi')$. The result follows from Theorem 5 in \cite{cappell}, together with Lemma \ref{condlemma} above.\end{proof}
Corollary \ref{freeproduct}, for example, allows us to do similar estimations for connected sums of manifolds satisfying the assumptions of Theorem \ref{main}. 

\begin{remark}
It can be seen from the proof  of Theorem \ref{main} (resp. Corollary \ref{freeproduct}) that we do not need a single map $f$ (or $g$) which is  simultaneously  a  $\bbZ/q$-homology isomorphism for every prime $q< (m+2)/2$ with $q\notin S$. It is enough that for every prime $q< (m+2)/2$ with $q\notin S$ there exist maps $f_q$ and $g_q$ (depending on $q$) which are $\bbZ/q$-homology isomorphisms. In this case, we need to replace $\ker(f^*)$ (or $g^*(\ker(f^*))$) by intersection over $q$ of all  $\ker(f_q^*)$ (or $g_q^*(\ker(f_q^*))$).  
\end{remark}

Let $\Theta_m$ denote the group of homotopy $m$-spheres. For any smooth $m$-manifold $M$, there is a subgroup $I(M)$ of $ \Theta_m$ called the inertia group of $M$, defined as $\{\Sigma\in \Theta_m: \Sigma \# M\cong M\}$, where $\cong$ here means diffeomorphic (see \cite{browder}). Two manifolds $M_1$ and $M_2$ are said to be almost diffeomorphic if there is a $\Sigma\in \Theta_m$ such that $M_1\# \Sigma^m\cong M_2$. It is known that almost diffeomorphic manifolds have isomorphic stable normal bundles, as homotopy spheres are stably parallelizable (see \cite{kervmiln}). Thus, their images are the same under $\Psi$. In order to determine the set of manifolds that are almost diffeomorphic to $M$, one needs to compute $I(M)$. Hence, to determine $\sM(X)$, it is necessary to know $I(M)$ for every $[M]$ in $\sM(X)$. It is known that $I(M)$ is not a homotopy type invariant, in fact it is not even a PL-homeomorphism type invariant, see for example \cite{DeSapio}. As a result, complete determination of $\sM(X)$ may not be possible in this generality.

The following corollary says that framed manifolds do not bound in ${\Omega_m(\xi)}$ for some $\xi:X\to BSO$.
\begin{corollary}\label{ed}
	Under the assumptions of Theorem \ref{main} together  with $m\neq 1(\md 8)$ and $S=\emptyset$, the  edge map  $\mathfrak{\bar{ed}}:\pi_*(\bbS)\to \pi_*( M\xi) $ is an inclusion for any stable vector bundle $\xi:X\to BSO$.
\end{corollary}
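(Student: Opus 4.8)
The plan is to show that the edge homomorphism $\mathfrak{\bar{ed}}\colon\pi_*(\bbS)\to\pi_*(M\xi)$ is split injective by exhibiting a left inverse coming from a degree one normal map. First I would recall that under the stated hypotheses ($m\not\equiv 1\pmod 8$ and $S=\emptyset$) Theorem \ref{main} tells us that $\Psi$ is surjective onto $\mathfrak{K}(X)$; in particular, every stable vector bundle $\xi\colon X\to BSO$ admits a degree one normal map, so by Lemma \ref{condlemma} Condition \ref{cond1} holds for $E^*_{*,*}(\xi)$. By the argument preceding Lemma \ref{condlemma}, we may in fact pick a class $[\rho\colon M\to X]\in\Omega_m(\xi)$ with $\rho_*[M]$ a generator of $H_m(X;\bbZ)$, and after the surgery provided by the vanishing of $L^s_m(\bbZ[\pi_1(X)])$ (Theorem 1 of \cite{bak}) we may take $\rho=\omega$ to be a simple homotopy equivalence $M\to X$ whose stable normal bundle lifts through $\xi$.

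Next I would use this $\omega\colon M\to X$ to produce the retraction. Since $\omega$ is a homotopy equivalence with homotopy inverse $g\colon X\to M$, and $\xi\circ\omega$ classifies (a stabilization of) the normal bundle of $M$, the lift $\rho$ determines a map of Thom spectra $M(\nu_M)\to M\xi$; composing with a homotopy inverse coming from $g$ identifies, up to homotopy, $M\xi$ with $M(\nu_M)$ on the relevant homotopy groups. The Pontrjagin--Thom / Atiyah duality identification $\pi_*(M(\nu_M))\cong \pi_*^{st}$ after one more application of Spanier--Whitehead duality (using that $M$ is a closed manifold) then gives a map $\pi_*(M\xi)\to\pi_*(\bbS)$, and I would check that its composite with $\mathfrak{\bar{ed}}$ is the identity. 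Concretely, $\mathfrak{\bar{ed}}$ is the other edge map of the {\sf AHSS} for $M\xi$, which under the Thom isomorphism corresponds to the unit $\bbS=M\xi|_*\to M\xi$ included fiberwise over a point; the composite above is then the effect of collapsing $M$ to a point after the identification, i.e.\ the identity on $\pi_*(\bbS)$. Equivalently: a framed manifold representing $\alpha\in\pi_n^{st}$ maps under $\mathfrak{\bar{ed}}$ to a class in $\Omega_n(\xi)$ whose underlying manifold is still $\alpha$-framed, and its framed cobordism class (recovered via the section) is again $\alpha$; hence $\mathfrak{\bar{ed}}$ is injective.

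The main obstacle, I expect, is making the retraction genuinely canonical and checking the composite carefully. The subtle point is that $\mathfrak{\bar{ed}}$ is an edge map of a spectral sequence, not obviously induced by a map of spectra a priori; but the {\sf AHSS} for $M\xi$ is the {\sf JSS} for $id\colon X\to X$, whose $E^2$ base-line row $E^2_{0,*}=H_0(X;\pi_*^{st})$ receives $\pi_*(\bbS)=\pi_*(M\xi|_*)$ via the fiber inclusion over a point, and Condition \ref{cond1} (which holds here) forces the relevant differentials into and out of that corner to vanish, so the edge map is realized by the actual spectrum map $\bbS\to M\xi$ classifying a point together with the trivial framing. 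Once this is set up, injectivity reduces to the statement that a degree one normal map $M\to X$ with $M$ a manifold, equipped with the canonical collapse $M\to \bbS^0$, splits that unit — which is exactly the classical fact that the Pontrjagin--Thom construction for the normal bundle of a closed manifold sends the fundamental class to a genuine retraction onto the bottom cell of $M\nu_M$. I would phrase the final step as: since some $\omega\in\Omega_m(\xi)$ is a simple homotopy equivalence, the bottom cell of $M\xi$ splits off, and the splitting is precisely a left inverse to $\mathfrak{\bar{ed}}$.
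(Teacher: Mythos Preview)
Your argument has a genuine gap in the duality/retraction step. The Pontrjagin--Thom construction for a closed $m$-manifold $M$ produces a map $S^m\to M\nu_M$ that is degree one on the \emph{top} cell; equivalently, under Atiyah duality $M\nu_M\simeq\Sigma^m D(M_+)$ it corresponds to the splitting $M_+\simeq \bbS\vee(\text{reduced }M)$ of the \emph{bottom} cell of $M_+$. What you need, however, is a retraction of the \emph{bottom}-cell inclusion $\bbS\to M\nu_M$, which under duality is equivalent to the \emph{top} cell of $M$ splitting off stably. That is not a general fact about closed manifolds: for $M=\bbC P^2$ one computes $M\nu_M\simeq S^4\vee\Sigma^{-2}\bbC P^2=S^4\vee C\eta$, and the unit $\bbS\to C\eta$ kills $\eta\in\pi_1^S$, so $\mathfrak{\bar{ed}}$ is not injective. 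Since your step~3 invokes only generalities about closed manifolds and Pontrjagin--Thom, it would prove injectivity for every $M\nu_M$, which is false. You also over-read Condition~\ref{cond1}: it concerns the differentials \emph{out of} $E^r_{m,0}$, not those \emph{into} the column $E^r_{0,*}$; apart from the single differential $d^m\colon E^m_{m,0}\to E^m_{0,m-1}$ these are distinct families, so Condition~\ref{cond1} alone does not make the corner $E^*_{0,*}$ permanent.

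The paper's proof proceeds differently and actually uses the homology-sphere hypothesis at this point: for each prime $q$ the first differential that can be nonzero on $q$-torsion occurs on the $(2q-2)$-page, and since $S=\emptyset$ forces $X$ to be a $\bbZ/q$-homology sphere for every prime $q$ with $2q-2<m$, one gets $d^r=0$ for all $r<m$; the comparison with the trivial bundle on $S^m$ (as in Lemma~\ref{dm}) kills $d^m$; and for $r>m$ the targets are zero. Hence the whole spectral sequence collapses at $E^2$, and in particular every differential hitting the column $E^r_{0,*}$ vanishes, which is exactly what makes $\mathfrak{\bar{ed}}$ injective. If you want to salvage your approach, you would need to prove directly that the top cell of $X$ (equivalently of $M$) splits off stably under these hypotheses; but that essentially requires the same vanishing-of-differentials argument the paper gives.
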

\begin{proof}
	As in the proof of Theorem \ref{main} for a prime $q$ the first differential in  $E^*_{*,*}({\xi})$ that acts non-trivially on $q$-torsion appears in  dimension $2q-2$. Since $S=\emptyset$, $X$ is a $\bbZ/q$-homology sphere for every prime $q$ with $2q-2<m$. Hence, $d^r=0$ for every $r<m$. As in the proof of Lemma \ref{dm}, by comparing with $E^*_{*,*}({\epsilon})$ we get $d^m=0$ (as  $E^*_{*,*}({\epsilon})$ collapses at the second page, due to degree reasons). Therefore, the first nontrivial differential  appears when $r\geq m+1$. But then the target of $d^r$ should be zero. Hence, $E^*_{*,*}({\xi})$ collapses at the second page, and we get that $\mathfrak{\bar{ed}}:\pi_*(\bbS)\to \pi_*( M\xi) $ is  an inclusion.
\end{proof}
Observe that the degree of $f$ (as in Theorem \ref{main}) plays the important role here, as it is co-prime to smaller primes. One can ask what the necessary and sufficient  conditions are on the pair $(X,\xi)$ so that the natural map $\mathfrak{\bar{ed}}:\pi_*^s\to \pi_{*}( M\xi )$ induced by the inclusion of point is injective. It is well known that such is not the case for classical Thom spectra like $MO$ or $MSO$ (see \cite{thom}). In the case when $\xi$ is a trivial bundle, there are examples for which this is true. Another possible question is: For which spaces $X$, this natural map $\pi_*^s\to \pi_{*}( M\xi )$  is injective for every  stable vector bundle $\xi:X\to BSO$. Corollary  \ref{ed} provides just one such example. %A refinement of these questions may be stated by considering finite group representations. Note that a representation of a group induces a bundle over its classifying space via Borel construction. Stabilizing this bundle give a map into $BO$. It is natural to ask for which groups and representations similar statements holds.

Suppose that $q=2$ and  $m=1(\md 8)$ in Lemma \ref{dm}. In this case $f$ induces the identity on cohomology with coefficients $\bbZ/2$. Bott periodicity theorem asserts that $\KO(S^m)=\KO^{-m}(S^0)=\bbZ/2$. The map $f$ induces a map on the Atiyah-Hirzebruch spectral sequences. At the second page we have $f^*:H^m(X;\bbZ/2)\to H^m(S^m;\bbZ/2)$, which is an isomorphism.  The mod-$2$ class in $ H^m(S^m;\bbZ/2) $ survives to the infinity page of the Atiyah-Hirzebruch spectral sequence for $\KO(S^m)$. Hence,  $f^*$ is a surjection on the infinity page by the naturality of the {\sf AHSS}. It follows that $f^*:\KO(X)\to \KO(S^m)$ is surjective (for the case when $X$ is a spherical space form, this follows from \cite{karoubi}, Theorem 1-(b)). Hence, we have the following remark:
\begin{remark}\label{ker}
	In the case when $q=2$ and  $m=1(\md 8)$ in Lemma \ref{dm}, we have $[\KO(X):\ker(f^*)]=2$.
\end{remark}
Since $2\notin S$ in Theorem \ref{main}, we have $[\KO(X):\ker(f^*)]=2$ as well. Thus, we can determine the image in the case when $m=1(\md 8)$ as well.

%From \cite{bak}, it can be seen that the proof also implies that a version of this theorem is true if we omit the word ``simple" at all. %Hence, under the assumptions of Theorem \ref{main}  with replacing the assumption that $X$ is a 'simple' Poincar\'{e} complex with just a finite Poincar\'{e} complex; we can replace $\sM(X)$ in Theorem \ref{main} with ``the set diffeomorphism classes of manifolds which are homotopy equivalent to $X$" and replace  $\mathfrak{K}(X) $ by the set of orbits, where the action on $\KO(X)$ is ``the action of  the subgroup of automorphisms induced by self equivalences". 

%This theorem can be extended to the case where $2$-Sylow subgroups of $\pi_1(X)$ is cyclic and  not a $2$-group and $m=1$ mod-$4$. If $\pi_1(X)$ is not a $2$-group, then in this case the surgery obstruction groups are also trivial (see \cite{ham}).

\section{Examples}\label{examples} 
Let $L^k(n)$ denote the quotient space $S^{2k+1}/\bbZ/n$ of the free linear action of  $\bbZ/n$ on $S^{2k+1}$. If $(q,n)=1$, then $L^k(n)$ is a $\bbZ/q$-homology sphere with the covering projection being the $\bbZ/q$-homology isomorphism. Let $L^k(n,\mu)$ denote the orbit space of a free action $\mu$ of $\bbZ/n$ on $S^{2k+1}$ where $\mu$ acts by homeomorphisms. Such $L^k(n,\mu)$  are often called fake lens spaces (see \cite{atlas} and \cite{macko}, \cite{bpw} for more details on topological and \cite{petrie2}, \cite{orlik} for smooth fake lens spaces). For any such action $\mu$, one can always find a lens space $L^k(n)$ homotopy equivalent to $L^k(n,\mu)$ (see \cite{bpw} P.456). If $p$ is a prime and $k$ is an integer with $k\leq 2p-3$, Theorem \ref{main} applies to any fake lens space $L^k(p,\mu)$. If $k< p-1$, then $S=\emptyset$ and if  $p-1\leq k\leq 2p-3$, then $S=\{p\}$ (note that dimension of $L^k(p,\mu)$ is $2k+1$). In this case, if $T=\{q\ \text{prime}: q\leq k+1\}\backslash \{p\}$, then for any prime $q\in T$, $L^k(p,\mu)$ is a $\bbZ/q$ homology sphere and $K(L^k(p,\mu))$ does not have any element of order $q$ for $q$ odd. In general, if $n$ is a natural number not divisible by primes less than or equal to $\frac{k+2}{2}$, $\mu$ is an action of $\bbZ/n$ on $S^{2k+1}$, $S=\{p\ \text{prime}: p|n\}$ and $T=\{q\ \text{prime}: q\leq k+1\}\backslash S$, then Theorem \ref{main} applies to $L^k(n,\mu)$ where the image of $\Psi $ consists of orbits in $\mathfrak{K}(L^k(n,\mu))_{(S,\mathtt{q}_1)}$ that are represented by elements in $\ker(f^*)$, where $f$ is the covering projection. Again, for $q\in T$ odd prime, the group $K(L^k(n,\mu))$ does not have any $q$-torsion. We refer to Theorem 2 in \cite{kambe} for the $K$-theory of a lens space and to \cite{smallen} for calculation of the group of homotopy classes of self homotopy equivalences of a lens space. For the particular cases when $k=p-3$ and $ k= 2p-4$, we can get from Theorem 2.A in \cite{smallen} that $\Aut_s(\KO(L^k(p,\mu)))$ has only $2$-elements, namely the identity and the automorphism mapping an element to its algebraic inverse. Hence, in these cases each orbit (except the orbit of $0$) in $\mathfrak{K}(L^k(p,\mu)) $ has exactly two elements.% If $k=p-3$ and $k$ is not divisible by $4$, then $\Psi$ is surjective. If $ k= 2p-4$, then the image of $\Psi$ is $\mathfrak{K}(L^k(p))_{(S,\mathtt{q}_1)} $ where $S=\{p\}$. %The affects of the relation on $\md$-$p$ Wu classes on this set can be determined by using Lemma 4.7 and Theorem 2 in \cite{kambe}.

Another class of examples can be obtained from spherical space forms. There is a vast literature on classification of spherical space forms, see for example \cite{mtw1}, \cite{mtw2} and \cite{mtw3}. Let $\Sigma$ be a homotopy $m$-sphere with $m\geq 5$. Let $\pi$ be a group that can act freely and smoothly on $\Sigma $ and let $X=\Sigma/\pi$, so that $f:\Sigma\to X$ is a principal $\pi$-bundle. Let $p\geq 3$ be the smallest prime dividing the order of $\pi$. Then there is a map $\varphi:X\to B\pi$ that classifies $f$. The group of self equivalences $ \Aut(X)$  of $X$ contains a normal subgroup isomorphic to all inner automorphism $\Inn(\pi)$ of $\pi$ (see  \cite{smallen} Corollary 1 and Theorem 1.4). Note that, an inner automorphism induces the identity on all (generalized) cohomology groups of $B\pi$, due to the commutativity in cohomology. Let  $\alpha:X\to X$ in $\Aut(X)$ be a self equivalence. Consider the diagram
\[\begin{tikzpicture}[scale=.8]
\node (0) at (1,0) {$X$};
\node (c) at (3,0) {$B\pi$};
\node (a) at (1,2) {$X$};
\node (b) at (3,2) {$B\pi$};
\path[->,font=\scriptsize,>=angle 90]
(a) edge node[above]  {$\varphi$} (b)
(b) edge node[right] {$\alpha_*$} (c)
(a) edge node[left] {$\alpha$} (0)
(0) edge node[above] {$\varphi$} (c);
\end{tikzpicture}\]
so that $ \alpha$ and  $ \alpha_*$ induce the same map on $\pi_1(X)=\pi$. By an argument as in \cite{davislect} Theorem 7.26, the diagram commutes up to homotopy. It is well-known that   $\varphi$ induces a surjection on $\KO$ (see for example \cite{karoubi}). Thereby, inner automorphisms of $\pi$ induce identity on $\KO(X)$ as well. Denote by $\Out(X)=\Aut(X)/\Inn(\pi)$ \emph{the group of outer self-equivalences of $X$}. Then, in order to determine  $\overline{\Aut_s}(\KO(X))$, we only need to consider automorphisms induced by self equivalences belonging to a fixed set of representatives of cosets in $\Out(X)$.

For $m\neq 1 (\md 8)$ and $ m<2p-2$, then part 1 of Theorem \ref{main} applies to $X$, so that $\Psi:\sM(X)\to \mathfrak{K}(X) $ is surjective. In this case $S=\emptyset$. For $m<4p-4$ and no other prime between $p$ and $2p$ divides the order of $\pi$, then the image of  $\Psi$  is determined by the first $\md\text{-} p$ Wu class of the Spivak normal bundle of $X$. In general, if $S$ is the set of primes  between $p$ and $2p$ which divide the order of $\pi$, then the image of $\Psi$ is  $ \mathfrak{K}(X)_{(S,\mathtt{q}_1)}$.
We refer to \cite{karoubi} for the computation of $\KO(X)$ (for $X=\Sigma/\pi$ as above) and the results given in \cite{golasinski1} (and although indirectly, in \cite{golasinski2}) for the computation of $\Aut(X)$. Of course we only need these computations when $\pi_1(X)$ is of odd order. The action of $\overline{\Aut_s}(\KO(X))$ on $\KO(X)$ is given by the restriction of the usual canonical action of the automorphism group $\Aut(\KO(X))$ on $\KO(X)$, which can be understood once $\KO(X)$ is known.

Let $m$ and $S$ be as in Theorem \ref{main}. Let $X_0$ and $X_1$ with given maps $f_i:S^m\to X_i$ for $i=0,1$ be two $\bbZ/q$-homology $m$-spheres, so that both $f_0$ and $f_1$ are $\bbZ/q$-homology isomorphisms for $q<(m+2)/2$ with $q\notin S$, i.e. Theorem \ref{main} applies to  both $X_0$ and $X_1$. Then, $X_0\# X_1$ is also a $\bbZ/q$-homology sphere for primes $q<(m+2)/2$ with $q\notin S$ (which follows easily from Mayer-Vietoris sequence) and the fundamental group of  $X_0\# X_1$ is the free product $\pi_1(X_0)*\pi_1(X_1)$ (which follows from a simple application of Van Kampen's Theorem). Thus, Corollary \ref{freeproduct} applies to the connected sum $X_0\# X_1$. If there exists a map $f:S^m\to X_0\# X_1$ satisfying the conditions of Theorem \ref{main}, then we can choose $g$ as the identity map. In this case,  $\Image(\Psi)$ consists of all orbits in $\mathfrak{K}(X_0\# X_1)_{(S,\mathtt{q}_1)} $ which are represented by elements in $\ker(f^*)$. If we can not find such a map $f$, then we can apply Corollary \ref{freeproduct} by using the zigzags $\displaystyle S^m\stackrel{f_i}\to X_i\stackrel{g_i}\leftarrow X_0\# X_1$, where $g_i$'s are the obvious collapse maps. In this case, $\Image(\Psi)$ contains all orbits in $\mathfrak{K}(X_0\# X_1)_{(S,\mathtt{q}_1)} $ which are represented by elements in $ g^*(\ker(f^*))$.

For a given finite $CW$-complex $X$, denote by $\overline{\Aut}(K^i(X))$ the subgroup of ${\Aut}(K^i(X))$ that consists of automorphisms induced by elements in $\Aut(X)$ (we simply write $\overline{\Aut}(K(X))$ when $i=0$). Due to the naturality of suspension isomorphism and Bott periodicity, we can identify $\overline{\Aut}(K(X)) $ with $\overline{\Aut}(K(\Sigma^{8i} X))$.
Hence, the natural map from $\Aut(X)$ to $\overline{\Aut}(K(X))$ factors through the group of stable self equivalences of $X$, which is equal to $\operatornamewithlimits{colim}_i{\Aut}(\Sigma^{8i} X)$ (see for example \cite{kahn}, \cite{petar} and \cite{johnston} for more details about the group of stable self equivalences). If $X$ is a $\mathbb Z/q$-homology sphere for a prime $q$, then all Betti numbers of $X$ are less than or equal to $1$. Thus, due to Theorem 1.1-(a) in \cite{johnston}, the group of stable self equivalences of $X$ is finite, which implies that $\overline{\Aut_s}(K(X))$ (which is a subgroup of $\overline{\Aut}(K(X))$) is finite.

 %For example, given primes $p_0$, $p_1$ with $k\leq \min(p_0, p_1)-1$ and actions $\mu_i$ of $\bbZ/p_i$ on $S^{2k+1}$ for $i=0,1$, Corollary \ref{freeproduct} applies to $L^k(p_0,\mu_0)\# L^k(p_1,\mu_1)$ with $S=\emptyset$. If $p_0$ and $p_1$ satisfy the inequalities $p_0-1\leq k\leq 2p_0-3$ and $k\leq p_1-1$ instead, then Corollary \ref{freeproduct} applies to $L^k(p_0,\mu_0)\# L^k(p_1,\mu_1)$ with $S=\{p_0\}$. For $p_i-1\leq k\leq 2p_i-3$ for $i=1,2$, Corollary \ref{freeproduct} applies to $L^k(p_0,\mu_0)\# L^k(p_1,\mu_1)$ with $S=\{p_0,p_1\}$. 

\subsection*{Acknowledgment}
{I wish to thank \"Ozg\"un \"Unl\"u and Matthew Gelvin for their valuable advices. I also thank the anonymous referee for valuable comments. This research was partially supported by T\"UB\.ITAK-B\.IDEB-2214/A Programme.}

\bibliographystyle{plain}
\bibliography{mybibfile}

% ------------------------------------------------------------------------
\end{document}